\newtheorem{thm}{Theorem}
\newtheorem{lem}{Lemma}
\theoremstyle{definition}
\newtheorem{rmk}{Remark}
\begin{document}

\title{\Large\bfseries Adaptive Priors based on Splines with Random  Knots}
\author{\bfseries Eduard Belitser and Paulo Serra \\
{\rm Department of Mathematics, Eindhoven University of Technology}}
\date{\today}
\maketitle

\begin{abstract}
Splines are useful building blocks when constructing priors on nonparametric models indexed by functions.
Recently it has been established in the literature that hierarchical priors based on splines with a random number of equally spaced knots and random coefficients in the B-spline basis corresponding to those knots lead, under certain conditions, to adaptive posterior contraction rates, over certain smoothness functional classes.
In this paper we extend these results for when the location of the knots is also endowed with a prior.
This has already been a common practice in MCMC applications, where the resulting posterior is expected to be more "spatially adaptive", but a theoretical basis in terms of adaptive  contraction rates was missing.
Under some mild assumptions,  we establish a result that provides sufficient conditions for adaptive contraction rates in a range of models.
\end{abstract}

\textbf{Keywords:{Adaptive estimation}, {bayesian non-parametric}, {optimal contraction rate}, {spline}, {random knots}.} 
\medskip


\section[Introduction]{Introduction}
\label{sec:intro}

The Bayesian approach in statistics has become quite popular in recent years as an alternative to classical \emph{frequentist} methods.
The main appeal of the Bayesian methodology is its conceptual simplicity:
given a model for the observed data $X \sim P_f$, $f\in\mathcal{F}$, some space of functions, put a prior on the unknown parameter $f$ and draw inferences based on the resulting posterior $\Pi(f|X)$.
Knowledge about the model under study can also be be incorporated into the inference procedure via the prior.
However, some seemingly "correct" priors can lead to unreasonable posteriors, especially in nonparametric models. 
It is therefore desirable to place ourselves in a setting where it is possible to assess the quality of the resulting posterior from some objective point of view.

This gave rise to the development of the notion of contraction rate (cf. \citet{Ghosal:2000}), a Bayesian analog of a convergence rate:
data is assumed to come from a fixed probability measure $P_0 = P_{f_0}$ for a "true" $f_0 \in \mathcal{F}$;
the contraction rate is then the smallest radius such that the posterior mass in a Hellinger ball of probability measures around $P_0$ converges to 1 in $P_0$-probability as some information index such as a sample size goes to infinity.

Some general results about posterior contraction rates establish sufficient conditions on prior distributions such that the resulting posteriors attain a certain contraction rate.
In this spirit, when studying specific priors, some authors now choose to present their results in the form of say {\it meta-theorems} which claim that sufficient conditions (such as the ones in \citet{Ghosal:2000}) required to attain a certain range of contraction rates hold for their choice of prior;
cf.\ \citet{deJonge:2012}, \citet{Shen:2012}, \citet{Vaart:2008b} and further references therein.
We adopt this practice here as well.
 
In the case where $f_0$ is a function from some functional space of smoothness $\alpha$, the posterior contraction rate is typically compared to the convergence rate of the minimax risk (called optimal rate) over that space in the estimation problem.
For example, if we observe a sample of size $n$ and want to estimate a univariate $\alpha$-smooth function (e.g., density or regression function), the typical optimal rate is of order $n^{-\alpha/(2\alpha+1)}$, possibly up to a logarithmic factor depending on the risk function.
If the smoothness parameter $\alpha$ is unknown, and one wants to build estimators which attain the optimal rate corresponding to $\alpha$ but do not depend explicitly on $\alpha$, one speaks of an adaptation problem.
In a Bayesian context, the adaptation problem consists in finding a prior which leads to the optimal posterior contraction rate (usually up to a logarithmic factor) for any $\alpha$-smooth function of interest and does not depend on the smoothness parameter $\alpha$.
Such priors are called rate adaptive.
There is a growing number of papers, where this problem has been studied in different settings;
cf.\ \citet{deJonge:2012}, \citet{Shen:2012}, \citet{Vaart:2008b}, \citet{Vaart:2009} and \citet{Belitser:2003} among others. 

Splines, in particular, can be used when constructing adaptive priors.
A spline (cf.\ \citet{deBoor:1978}) is a piecewise polynomial function designed to have a certain level of smoothness which is referred to as its order. 
Splines are easy to store, differentiate, integrate and evaluate on a computer, and are extensively used in practice for constructing good, parsimonious  approximations of smooth functions.
The points at which the different polynomial pieces of a spline connect are called knots. 
If an order (read: maximal polynomial degree) and a set of knots is fixed, then the space of all splines with that order and those knots forms a linear space which admits a basis of so called B-splines.
Any spline of a fixed order is consequently characterized by a set of knots and its coordinates in the B-splines basis corresponding to those knots.
Randomly generating a number of knots and, given those, generating random coordinates in the corresponding B-spline basis with equally spaced knots results in a random spline whose law can be used as a prior.
If, given the number of knots, the coordinates in the corresponding B-spline basis are chosen to be independent and normally distributed, then the resulting spline has a conditionally Gaussian law and was studied by \citet{deJonge:2012} by using Reproducing Kernel Hilbert Space techniques. 
\citet{Shen:2012} propose a more general, random series prior:
the coefficients in the series are not necessarily independent or Gaussian and a basis other than the B-spline basis can also be used.

The case where the locations of the knots are also random is not covered by the results of either \citet{deJonge:2012} or \citet{Shen:2012}.
However when practitioners put a prior on the number of knots they almost invariably also put a prior on the locations of the knots (e.g., \citet{Denison:1998}, \citet{diMatteo:2001}, \citet{Sharef:2010}) -- a Poisson process is a popular choice.
Their motivation for allowing arbitrarily located knots seems to be twofold.
Firstly, this is attractive from the implementation point of view:
designing reversible jump MCMC samplers is much simpler if any collection of knots is allowed since new knots can be inserted at arbitrary positions causing only localized changes in the spline.
Secondly, the resulting posterior based on the prior with random locations of the knots is expected to be more "spatially adaptive":
the function of interest may not have a fixed level of smoothness throughout its support, it may consist of rough and smooth pieces.
To sustain an adequate level of accuracy over the whole support, more knots are needed in rough pieces and less in smooth ones.
Therefore, to make it at least possible for the resulting posterior to pick up eventual spatial features of the function, the prior has to be flexible enough to model random locations of the knots.

In this paper, we extend the results of \citet{deJonge:2012}, and those of \citet{Shen:2012} in respect to the prior with random knots:
we add one more level to the hierarchical spline prior by putting a prior on the location of the knots of the spline as well, making, in fact, the basis functions also random.
Under some mild assumptions on the proposed hierarchical spline prior,
we establish our main result for the proposed prior, providing sufficient conditions for adaptive, optimal contraction rates of the resulting posterior in a range of models (among others: density estimation, nonparametric regression, binary regression, Poisson regression, and classification).
In doing so, we provide a theoretical basis for the common practice of using randomly located knots in spline based priors.

\section[Preliminaries]{Notation and preliminaries on splines}
\label{sec:notation}

First we introduce some notation.
For $d\in\mathbb{N}$ and $1\le p < \infty$ denote by $\|\bm{x}\|_p = \big(\sum_{i=1}^d |x_i|^p\big)^{1/p}$ the $l_p$-norm of $\bm{x}=(x_1,\ldots, x_d) \in\mathbb{R}^d$ and by $\|\bm{x}\|_\infty = \max_{i=1,\dots,d} |x_i|$.
For $1\le p < \infty$ let the $L_p$-norm of a function $f$ on [0,1] be $\|f\|_p = \big(\int_0^1 |f(x)|^p\, dx)^{1/p}$ and $\|f\|_\infty = \sup_{x\in[0,1]}|f(x)|$.

We use $\lesssim$ (respectively $\gtrsim$) to denote smaller (respectively greater) or equal up to a constant,  the symbols $a \vee b$ and $a \wedge b$ stand for $\max\{a,b\}$ and $\min\{a,b\}$ respectively.
The covering number $N(\epsilon, S, d)$ of a subset $S$ of a metric space with balls of size $\epsilon$  is the smallest number of balls (with respect to distance $d$) of radius $\epsilon$ needed to cover $S$.

Now we provide some preliminaries on splines, which can be found, for example, in \citet{Schumaker:2007}. 
A function is called a spline is of order $q\in\mathbb{N}$, with respect to a certain partition of its support, if it is $q-2$ times continuously differentiable and when restricted to each interval in this partition, coincides with a polynomial of degree at most $q-1$.
Consider $q\in\mathbb{N}$, $q\ge2$, which will be fixed throughout the remainder of this text.
For any $j\in\mathbb{N}$, such that $j\ge q$ let $\mathcal{K}_j = \{( k_1, \dots, k_{j-q} )\in (0, 1)^{j-q}: 0<k_1< \dots < k_{j-q}<1 \}$.
We will refer to a vector $\bm{k}=\bm{k}_j\in\mathcal{K}_j$ as a set of inner knots; the index $j$ in $\bm{k}_j$ will sometimes be used to emphasize the dependence on $j$.
A vector $\bm{k}\in\mathcal{K}_j$ will be said to induce the partition $\big\{ [k_0, k_1), [k_1, k_2), \dots, [k_{j-q}, k_{j-q+1}]\big\}$, with $k_0 = 0$ and $k_{j-q+1} = 1$. 
For any $\bm{k} \in\mathcal{K}_j$ we will call $M(\bm{k}) = \max_{i=1}^{j-q+1} |k_i-k_{i-1}|$ the mesh size of the partition induced by $\bm{k}$ and $m(\bm{k}) = \min_{i=1}^{j-q+1} |k_i-k_{i-1}|$ the sparseness of the partition induced by $\bm{k}$.
For a $\bm{k} \in\mathcal{K}_j$, denote by $\mathcal{S}^{\bm{k}}=\mathcal{S}^{\bm{k}}_q$ the linear space of splines  of order $q$ on $[0, 1]$ with simple knots $\bm{k}$ (see the definition of knot multiplicity in \citet{Schumaker:2007}).
This space has dimension $j$ and admits a basis of so called B-splines $\{B_1^{\bm{k}},\ldots, B_j^{\bm{k}}\}$. 
The construction of $\{B_1^{\bm{k}},\ldots, B_j^{\bm{k}}\}$  involves the knots $k_{-q+1},\ldots, k_{-1}, k_0, k_1, \ldots, k_{j-q}, k_{j-q+1}, k_{j-q+2},\ldots,k_j$, with arbitrary extra knots $k_{-q+1} \le \cdots \le k_{-1} \le k_0 =0$ and $1=k_{j-q+1} \le k_{j-q+2} \le \cdots \le k_j$. 
Usually one takes $k_{-q+1}=\cdots =k_{-1}= k_0 =0$ and $1=k_{j-q+1}=\cdots =k_j$, and we adopt this choice here as well.
These basis functions are nonnegative: $B_i^{\bm{k}}(x) \ge 0$, for all $x\in[0,1]$.
Besides, they have local support and form a partition of unity:
\begin{equation}
\label{local_support}
B_i^{\bm{k}}(x)= 0 \;\; \mbox{for } x\not\in [k_{-q+i},k_i],	\quad	\sum_{i=1}^{j} B_i^{\bm{k}}(x)=1 \; \; \mbox{for all } x\in[0,1].
\end{equation}
To refer explicitly to the coordinates $\bm{a}=(a_1,\ldots, a_j) \in\mathbb{R}^{j}$ of a spline on a specific B-spline basis with inner knots $\bm{k}$, we write $s_{\bm{a},\bm{k}}(x) = \sum_{i=1}^j a_i B_i^{\bm{k}}(x)$, $x \in [0,1]$.
Since $\sum_{i=1}^{j} B_i^{\bm{k}}(x)=1$, it is easy to see that for any $s_{\bm{a},\bm{k}}, s_{\bm{b},\bm{k}}\in \mathcal{S}^{\bm{k}}_q$
\begin{equation}
\label{N1}
\|s_{\bm{a},\bm{k}} - s_{\bm{b},\bm{k}}\|_2 \le
\|s_{\bm{a},\bm{k}} - s_{\bm{b},\bm{k}}\|_\infty
\le\|\bm{a} - \bm{b}\|_\infty
\le \|\bm{a} - \bm{b}\|_2.
\end{equation}

Splines have good approximation properties for sufficiently smooth functions provided they are defined on a partition with appropriately small mesh size.
We say that a function $f$ on $[0,1]$  belongs to a generic smoothness class $\mathcal{F}_\alpha$, $\alpha>0$,  if for any set of inner knots $\bm{k}$ there exists a spline $s_{\bm{a},\bm{k}} \in \mathcal{S}^{\bm{k}}_q$ such that for some bounded $C_f$
\begin{equation}
\label{Ap}
\|f-s_{\bm{a},\bm{k}}\|_\infty \le C_f M^{\alpha}(\bm{k}).
\end{equation}
We will also be assuming that $\mathcal{F}_\alpha$ is contained in a Lipschitz class:
$\mathcal{F}_\alpha \subseteq \mathcal{L}(\kappa_\alpha, L_\alpha) = \{f: |f(x_1)-f(x_2)| \le L_\alpha |x_1-x_2|^{\kappa_\alpha}, x_1,x_2 \in [0,1] \}$ for some $\kappa_\alpha, L_\alpha>0$.

A leading example of a smoothness class $\mathcal{F}_\alpha$ is the H\"older space  $\mathcal{H}_\alpha=\mathcal{H}_\alpha(L, [0,1])$, $0<\alpha\le q$, which is the collection of all functions $f$ that have bounded derivatives up to order $\alpha_0=\lfloor\alpha\rfloor = \max\{z\in \mathbb{Z}: \, z <\alpha\}$ and such that the $\alpha_0$-th derivative satisfies the H\"older condition $|f^{(\alpha_0)}(x) - f^{(\alpha_0)}(y)| \le L |x-y|^{\alpha-\alpha_0}$, for $L>0$ and $x,y\in[0,1]$.
In this case, a well-known spline approximation result (cf.\ \citet{deBoor:1978}) claims that (\ref{Ap}) holds with $C_f= C_q \|f^{(\alpha)}\|_\infty$ for some constant $C_q$ depending only on $q$.
Other examples of smoothness classes for which the approximation property (\ref{Ap}) hold, include $\alpha$-times continuously differentiable functions, Sobolev and Besov spaces; cf.\ Theorems 6.21, 6.25 and 6.31 in \citet{Schumaker:2007}.

\section[Main Result]{Main Result}
\label{sec:result}

We begin by describing a hierarchical prior on $\mathcal{S}=\mathcal{S}_q=\cup_{j=q}^\infty \cup_{\bm{k}\in\mathcal{K}_j}\mathcal{S}^{\bm{k}}_q$:
first draw a number $J\in \mathbb{N}$, $J\ge q$; then, given $J$, generate independently $(J-q)$ inner knots $\bm{K}_J\in\mathcal{K}_j$ and also independently, $J$ B-spline coefficients $\bm{\theta}\in\mathbb{R}^J$.
Our prior on $\mathcal{S}$ will be the law of the random spline $s_{\bm{\theta},\bm{K}_J}$.
We impose the following conditions on this prior.
For $c_1, c_2>0$, $0 \le t_1, t_2 \le 1$ and all sufficiently large $j$,
\begin{align}
\label{A1-1}
\mathbb{P}( J > j)	&	\lesssim	\exp\big( - c_1 j \log^{t_1} j \big),\\
\label{A1-2}
\mathbb{P}(J =j)	&\gtrsim		\exp\big( - c_2 j \log^{t_2} j \big).
\end{align}
For some $\tau\ge 1$, $c_3>0$, $0 \le t_3 \le 1$, and all $j\ge q$,
\begin{align}
\label{A0}
\mathbb{P}\big( m(\bm{K}_j) < \delta(j) | J = j \big)	&	=		0,\\
\label{A2}
\mathbb{P}\big( M(\bm{K}_j) \le \tau/j|  J = j \big)	&	\gtrsim	\exp\big(- c_3 j \log^{t_3}j\big),
\end{align}
where $\delta(i)$ is a positive, strictly decreasing function on $\mathbb{N}$.
Without loss of generality assume that $\delta(i) \le 1$, $i \in \mathbb{N}$.
For each $j\ge q$, the conditional distribution of $\bm{\theta}\in\mathbb{R}^j$ satisfies the following condition: for any $M>0$ there exists $c_0=c_0(M)$ such that 
\begin{equation}
\label{A3}
\mathbb{P}\big(\|\bm{\theta} - \bm{\theta}_0\|_\infty \le \epsilon |  J = j \big)	\gtrsim	\exp\big( - c_0 j \log(1/\epsilon) \big)
\end{equation}
for all $\epsilon>0$ and all $\bm{\theta}_0\in\mathbb{R}^j$ such that $\|\bm{\theta}_0\|_\infty \le M$.

For examples of particular choices on the components of our hierarchical prior which verify these conditions we refer the reader to Section \ref{examples}.

Denote $\mathcal{C}^j(M) =[-M,M]^j$.
The following theorem is our main result.

\begin{thm}
\label{theo}
Let $\|f_0\|_\infty < M$ and $f_0 \in \mathcal{F}_{\alpha}$ so that (\ref{Ap}) holds with $C_{f_0}$.
Let $\epsilon_n,\bar{\epsilon}_n$ be two positive sequences such that $\epsilon_n \ge \bar{\epsilon}_n$, $\epsilon_n \rightarrow 0$ as $n\rightarrow \infty$ and $n\bar{\epsilon}_n^2>1$.
Assume that there exist sequences $J_n, \bar{J}_n>q$, $M_n>0$ and a constant $c_M \ge c_1$ satisfying:
\begin{align}
\label{C1}
J_n \log\Big[\frac{J_n (M_n \vee 1) }{\epsilon_n \delta(J_n)}\Big]	&	\lesssim 	n\epsilon_n^2,\\
\label{C2}
\frac{n\bar{\epsilon}_n^2}{\log^{t_1}J_n}\le J_n, \;\;	P\big(\bm{\theta}\not\in \mathcal{C}^j(M_n)| J = j \big)	&\lesssim
\exp(- c_M n\bar{\epsilon}_n^2),\; q \le j \le J_n,\\
\label{C4}
\Big[\frac{\bar{\epsilon}_n}{\tau^\alpha C_{f_0}}\Big]^{-1/\alpha}\le \bar{J}_n, \quad \log^{t_2 \vee t_3} \bar{J}_n &\lesssim\log\frac1{\bar{\epsilon}_n}. 
\end{align}
Let  $\mathcal{S}_n=\cup_{j=q}^{J_n} \cup_{\bm{k}\in \mathcal{K}^{\delta(j)}_j} \big\{ s_{\bm{\theta}, \bm{k}}\in \mathcal{S}_q^{\bm{k}}: \|\bm{\theta}\|_\infty \le M_n \big\}$, where $\mathcal{K}_j^\delta=\{ \bm{k}\in\mathcal{K}_j: m(\bm{k}) > \delta\}$.
Then it holds that
\begin{align}
\label{S1}
\log N(\epsilon_n, \mathcal{S}_n,\|\cdot\|_2)							&	\lesssim	n\epsilon_n^2,\\
\label{S2}
P\big(s_{\bm{\theta}, \bm{K}_J}  \not\in \mathcal{S}_n\big)				&	\lesssim	\exp\big(- c_1 n\bar{\epsilon}_n^2 \big),\\
\label{S3}
P\big(\|s_{\bm{\theta}, \bm{K}_J} - f_0\|_\infty \le 2 \bar{\epsilon}_n \big)	&	\gtrsim	\exp\big\{-(c_0(M)+c_2+c_3) \bar{J}_n\log(1/\bar{\epsilon}_n)\big\}.
\end{align}
\end{thm}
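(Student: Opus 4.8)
These three displays are precisely the conditions of \citet{Ghosal:2000} applied to the sieve $\mathcal{S}_n$ -- an entropy bound, an exponential bound on the prior mass outside $\mathcal{S}_n$, and a lower bound on the prior mass near $f_0$ -- so the plan is to verify \eqref{S1}, \eqref{S2} and \eqref{S3} in turn. The only genuinely new ingredient relative to the fixed-knot case of \citet{deJonge:2012} and \citet{Shen:2012} is a quantitative estimate for how the spline $s_{\bm{\theta},\bm{k}}$ changes when its knot vector is perturbed, and this is where the sparseness floor $m(\bm{k})>\delta(j)$ built into $\mathcal{S}_n$, hence assumption \eqref{A0}, enters.

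For the entropy bound \eqref{S1} I would cover $\mathcal{S}_n$ slice by slice in $j$. Fix $j$ with $q\le j\le J_n$; since $\delta$ is decreasing, every admissible knot vector has $m(\bm{k})>\delta(j)\ge\delta(J_n)$. First put an $\|\cdot\|_\infty$-grid of mesh $\eta$ on $\mathcal{K}_j^{\delta(j)}\subset(0,1)^{j-q}$, containing at most $(C/\eta)^{j}$ points; using the standard B-spline fact that, for a fixed coefficient vector, $\|s_{\bm{\theta},\bm{k}}-s_{\bm{\theta},\bm{k}'}\|_\infty$ is at most a $q$-dependent constant times $\|\bm{\theta}\|_\infty\|\bm{k}-\bm{k}'\|_\infty/\big(m(\bm{k})\wedge m(\bm{k}')\big)$ (cf.\ \citet{Schumaker:2007}), the choice $\eta\asymp\epsilon_n\delta(J_n)/(M_n\vee1)$ makes $s_{\bm{\theta},\cdot}$ oscillate by at most $\epsilon_n/2$ over a single cell, uniformly in $\|\bm{\theta}\|_\infty\le M_n$. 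For each grid knot $\bm{k}^{*}$ the map $\bm{a}\mapsto s_{\bm{a},\bm{k}^{*}}$ is $1$-Lipschitz from $(\mathbb{R}^{j},\|\cdot\|_\infty)$ to $(L_2,\|\cdot\|_2)$ by \eqref{N1}, so an $\|\cdot\|_\infty$-net of $[-M_n,M_n]^{j}$ of radius $\epsilon_n/2$, of cardinality at most $(4M_n/\epsilon_n+1)^{j}$, produces an $\epsilon_n$-net of the corresponding slice. Multiplying the two counts, summing over the at most $J_n$ values of $j$, taking logarithms and absorbing the $\log J_n$ from the union gives $\log N(\epsilon_n,\mathcal{S}_n,\|\cdot\|_2)\lesssim J_n\log\big[J_n(M_n\vee1)/(\epsilon_n\delta(J_n))\big]$, which is $\lesssim n\epsilon_n^2$ by \eqref{C1}.

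For \eqref{S2}, the event $\{s_{\bm{\theta},\bm{K}_J}\notin\mathcal{S}_n\}$ forces $J>J_n$, or $m(\bm{K}_J)\le\delta(J)$, or $\bm{\theta}\notin\mathcal{C}^{j}(M_n)$ with $q\le J\le J_n$; the middle event is null by \eqref{A0} (taking the knot vector to be continuously distributed), so conditioning on $J$ gives $P(s_{\bm{\theta},\bm{K}_J}\notin\mathcal{S}_n)\le P(J>J_n)+\sum_{j=q}^{J_n}P(J=j)\,P(\bm{\theta}\notin\mathcal{C}^{j}(M_n)\mid J=j)$. By \eqref{A1-1} and the first inequality in \eqref{C2}, $P(J>J_n)\lesssim\exp(-c_1 J_n\log^{t_1}J_n)\le\exp(-c_1 n\bar{\epsilon}_n^2)$, while by the second inequality in \eqref{C2} and $c_M\ge c_1$ the sum is at most $\exp(-c_M n\bar{\epsilon}_n^2)\le\exp(-c_1 n\bar{\epsilon}_n^2)$; adding the two bounds proves \eqref{S2}.

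For \eqref{S3} I would restrict to $J=\bar{J}_n$ -- of positive probability by \eqref{A1-2}, and which we may take integral and at least $q$ for $n$ large since $\bar{J}_n\to\infty$ -- and to the event $A=\{M(\bm{K}_{\bar{J}_n})\le\tau/\bar{J}_n\}$. On $A$ the first part of \eqref{C4} gives $M(\bm{K}_{\bar{J}_n})^{\alpha}\le(\tau/\bar{J}_n)^{\alpha}\le\bar{\epsilon}_n/C_{f_0}$, so by \eqref{Ap} there is a spline $s_{\bm{a}(\bm{K}),\bm{K}}$ with $\|f_0-s_{\bm{a}(\bm{K}),\bm{K}}\|_\infty\le\bar{\epsilon}_n$; choosing $\bm{a}(\bm{K})$ as a de Boor quasi-interpolant makes $\|\bm{a}(\bm{K})\|_\infty$ at most a $q$-dependent multiple of $\|f_0\|_\infty<M$, and we write $c_0(M)$ for the associated constant in \eqref{A3}. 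By \eqref{N1}, on $A\cap\{\|\bm{\theta}-\bm{a}(\bm{K})\|_\infty\le\bar{\epsilon}_n\}$ we have $\|s_{\bm{\theta},\bm{K}}-f_0\|_\infty\le2\bar{\epsilon}_n$, so by the conditional independence of $\bm{\theta}$ and $\bm{K}_{\bar{J}_n}$ given $J=\bar{J}_n$ and then \eqref{A3},
\[ P\big(\|s_{\bm{\theta},\bm{K}_J}-f_0\|_\infty\le2\bar{\epsilon}_n\big)\;\gtrsim\;P(J=\bar{J}_n)\,P(A\mid J=\bar{J}_n)\,\exp\big(-c_0(M)\,\bar{J}_n\log(1/\bar{\epsilon}_n)\big). \]
Bounding $P(J=\bar{J}_n)\gtrsim\exp(-c_2\bar{J}_n\log^{t_2}\bar{J}_n)$ by \eqref{A1-2} and $P(A\mid J=\bar{J}_n)\gtrsim\exp(-c_3\bar{J}_n\log^{t_3}\bar{J}_n)$ by \eqref{A2}, and using the second part of \eqref{C4} to absorb $\log^{t_2\vee t_3}\bar{J}_n$ into $\log(1/\bar{\epsilon}_n)$, yields \eqref{S3}. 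The step I expect to be the main obstacle is the knot-discretisation inside \eqref{S1}: it rests entirely on the perturbation estimate above, which in turn requires a strictly positive lower bound $\delta(j)$ on the inter-knot gaps -- without such a floor the B-spline basis degenerates as knots coalesce, $\mathcal{K}_j$ admits no finite $\epsilon$-net for the relevant metric, and \eqref{A0} can therefore not be dispensed with.
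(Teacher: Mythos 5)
Your proposal follows essentially the same route as the paper's proof: discretise the sieve slice-by-slice in $j$ using a knot-grid and a coefficient-net for (\ref{S1}), decompose the complement of $\mathcal{S}_n$ into the three events $\{J>J_n\}$, $\{m(\bm{K}_J)<\delta(J)\}$, $\{\bm{\theta}\notin\mathcal{C}^j(M_n)\}$ for (\ref{S2}), and condition on $J=\bar{J}_n$ and a small-mesh event for (\ref{S3}), and the observation that condition (\ref{A0}) is what makes the knot-perturbation bound and hence the entropy argument work is exactly the right one. Two technical points deserve care. First, the perturbation estimate you attribute to Schumaker is quoted with Lipschitz constant proportional to $M\,\|\bm{k}-\bm{k}'\|_\infty/\delta$; the paper's Lemma~\ref{lemma1} (via Schumaker's Theorems 4.26--4.27 and a divided-difference bound in Lemma~\ref{lemma0}) gives $L=4j(q+1)M\delta^{-(q+1)}$, i.e.\ an extra factor $j$ and a $\delta^{-(q+1)}$ rather than $\delta^{-1}$; since $q$ is fixed this only changes constants inside the logarithm and (\ref{S1}) still follows from (\ref{C1}), but your mesh $\eta\asymp\epsilon_n\delta(J_n)/(M_n\vee1)$ would in fact be too coarse as stated. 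Second, in (\ref{S3}) a de Boor quasi-interpolant only gives $\|\bm{a}(\bm{K})\|_\infty\le C_q\|f_0\|_\infty$ for some $q$-dependent $C_q$, which could exceed $M$; the paper's Lemma~\ref{lemma2} does the sharper job of showing the approximating coefficients can themselves be taken in $[-M,M]$ once the mesh is small, and that is precisely what is needed to invoke (\ref{A3}) with the constant $c_0(M)$ appearing in the statement rather than $c_0(C_qM)$.
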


\begin{rmk}
Consider constants $c_4, c_5>0$ and a function $\delta(\cdot)$ as above.
If condition (\ref{A0}) is replaced by
\begin{align}
\label{A0'}
\sum_{j=q}^{J_n} \mathbb{P}\big( J = j \big) \mathbb{P}\big( m(\bm{K}_j) < \delta(j) | J = j \big)	&	\le		c_5 \exp(-c_4 n),\tag{6'}
\end{align}
then the conclusions of Theorem \ref{theo} remain valid so long as $J_n$ is a sequence satisfying (\ref{C1}) and (\ref{C2})
(cf.\ Section \ref{examples} and Remark \ref{rem} for a comparison of (\ref{A0}) and (\ref{A0'}).)
\end{rmk}

\begin{proof}
First we establish (\ref{S1}).
Let $L_n(j)=4M_nj(q+1)(\delta(j))^{-(q+1)}$ and $j>q$.
Let $\{\bm{\theta}_1, \ldots, \bm{\theta}_{m_1}\}$ be an $\epsilon_n/2$-net of the set $\{\bm{\theta} \in\mathbb{R}^j: \|\bm{\theta}\|_\infty\le M_n\}$ and let $\{\bm{x}_1, \ldots, \bm{x}_{m_2}\}$ be an $\epsilon_n/(2L_n(j))$-net of $\{\bm{x}\in\mathbb{R}^{j-q}:\bm{x} \in (0,1)^{j-q}\}$, both with respect to the $\|\cdot\|_\infty$-norm.
Then, by using (\ref{N1}) and Lemma \ref{lemma1} (Lemma \ref{lemma1} is applicable since $\epsilon_n/(2L_n(j))\le \delta(j)$ for sufficiently large $n$), $\{s_{\bm{\theta_k},\bm{x}_l}, k=1,\ldots,m_1, \; l=1,\ldots m_2\}$ forms an $\epsilon_n$-net of $\cup_{\bm{k}\in \mathcal{K}^{\delta(j)}_j}\big\{ s_{\bm{\theta}, \bm{k}}\in \mathcal{S}_q^{\bm{k}}:\, \|\bm{\theta}\|_\infty \le M_n \big\}$ with respect to the $\|\cdot\|_\infty$-norm.
By using this fact, we obtain
\begin{align*}
& 	N\big(\epsilon_n, \mathcal{S}_n, \|\cdot\|_2\big)	\le	N\big(\epsilon_n,\mathcal{S}_n, \|\cdot\|_\infty \big) \\
& \le	\sum_{j=q}^{J_n} N\Big(\epsilon_n, \cup_{\bm{k}\in \mathcal{K}^{\delta(j)}_j} \big\{s_{\bm{\theta}, \bm{k}}\in \mathcal{S}_q^{\bm{k}}:\, \|\bm{\theta}\|_\infty \le M_n \big\},\|\cdot\|_\infty\Big)\\
&\le	\sum_{j=q}^{J_n} \Big[N\Big(\frac{\epsilon_n}{2}, \big\{\bm{\theta} \in \mathbb{R}^j: \|\bm{\theta}\|_\infty \le M_n \big\},\|\cdot\|_\infty\Big) N\Big(\frac{\epsilon_n}{2L_n(j)}, (0,1)^{j-q}, \|\cdot\| \infty\Big) \Big]\\
& \le J_n \Big[\frac{2(M_n\vee 1)}{\epsilon_n}\Big]^{J_n} \Big[\frac{2L_n(J_n)}{\epsilon_n} \Big]^{J_n -q}\le J_n \Big(\frac{16(q+1) (M_n \vee 1)^2 J_n}{\epsilon_n^2 (\delta(j))^{q+1}} \Big)^{J_n}.
\end{align*}
The last relation and (\ref{C1}) imply (\ref{S1}):
\[
\log N \big(\epsilon_n, \mathcal{S}_n, \|\cdot\|_2\big)	\lesssim	J_n \log \Big[\frac{J_n(M_n \vee 1)}{\epsilon_n \delta(J_n)} \Big] \lesssim n\epsilon_n^2.
\]

Now we check (\ref{S2}).
From the definition of $\mathcal{S}_n$, the relations (\ref{A1-1}), (\ref{A0}) and (\ref{C2}), it follows that
\begin{align*}
\mathbb{P}\big(s_{\bm{\theta}, \bm{K}_J} \not\in\mathcal{S}_n\big)	&\le	\mathbb{P}\big( J > J_n \big) + \sum_{j=q}^{J_n} \mathbb{P}\big( J = j \big) \mathbb{P}\big( m(\bm{K}_j) < \delta(j) | J = j \big)\\
&+ \sum_{j=q}^{J_n} \mathbb{P}\big( J = j \big)\mathbb{P}\big( \bm{\theta} \not\in\mathcal{C}^j(M_n)|J = j \big)\\
&\lesssim \exp\big\{-c_1 J_n \log^{t_1} J_n \big\} + 0 + \exp\big\{-c_M n\bar{\epsilon}_n^2 \big\}\\
&\lesssim \exp\big\{-c_1 n\bar{\epsilon}_n^2 \big\}.
\end{align*}

It remains to prove (\ref{S3}).
First note that, by using (\ref{Ap}) and (\ref{C4}), for all $j\ge \bar{J}_n$ and for all sets of knots $\bm{k}_j\in\mathcal{K}_j$ such that $M(\bm{k}_j) \le \tau/ j$, there exists a spline $s_{\bm{\theta}_0,\bm{k}_j}\in \mathcal{S}_q^{\bm{k_j}}$ (of course, $\bm{\theta}_0=\bm{\theta}_0(\bm{k}_j)=\bm{\theta}_0(\bm{k}_j, f_0))$ such that  
\begin{equation}
\label{theorem1_a}
\|f_0 - s_{\bm{\theta}_0,\bm{k}_j}\|_\infty \le C_{f_0}  M^\alpha(\bm{k}_j) \le C_{f_0} \tau^\alpha  \bar{J}_n^{-\alpha} \le \bar{\epsilon}_n.
\end{equation}

Since $\|f_0\|_\infty < M$ and $\bar{J}_n$ must grow with $n$ in view of (\ref{C4}), it follows from Lemma \ref{lemma2} and (\ref{theorem1_a}) $\|\bm{\theta}_0(\bm{k}_j)\|_\infty \le M$ for all $\bm{k}_j\in\mathcal{K}_j$ such that $M(\bm{k}_j) \le \tau /\bar{J}_n$ for $j\ge \bar{J}_n$.

Introduce the events:
$E_1^j=\{M(\bm{K}_j) \le \tau/j\}$,
$E_2^j=\{\|f_0 - s_{\bm{\theta}_0(\bm{K}_j),\bm{K}_j}\|_\infty \le \bar{\epsilon}_n\}$, 
$E_3^j=\{\|\bm{\theta}_0(\bm{K}_j)- \bm{\theta}\|_\infty \le \bar{\epsilon}_n\}$,
$E_4^j=\{\|f_0-s_{\bm{\theta}, \bm{K}_j}\|_\infty \le 2\bar{\epsilon}_n\}$ and
$E_5^j=\{\|\bm{\theta}_0(\bm{K}_j)\|_\infty \le M\}$.
Using the argument from the previous paragraph, the triangle inequality, (\ref{N1}) and (\ref{theorem1_a}),  we obtain that
\begin{equation}
\label{theorem1_b}
E^{\bar{J}_n}_1 \subseteq E^{\bar{J}_n}_2, \quad	E^{\bar{J}_n}_1 \subseteq E^{\bar{J}_n}_5, \quad	E^j_2 \cap E^j_3 \subseteq E^j_4, \quad j \ge q.
\end{equation}
Combining (\ref{A1-2}), (\ref{A2}), (\ref{A3}), (\ref{C4}) and (\ref{theorem1_b}), we prove (\ref{S3}):
\begin{align*}
\mathbb{P}& \big(\|s_{\bm{\theta}, \bm{K}_J} - f_0\|_\infty \le 2 \bar{\epsilon}_n \big) = \mathbb{P}(E^{J}_4) \ge \mathbb{P}(J=\bar{J}_n) \mathbb{P}\big(E^{\bar{J}_n}_4 |J=\bar{J}_n)\\
&\ge	\mathbb{P}(J=\bar{J}_n) \mathbb{P}\big(E^{\bar{J}_n}_2 \cap E_3^{\bar{J}_n} |J=\bar{J}_n)\\
&\ge	\mathbb{P}(J=\bar{J}_n) \mathbb{P}\big(E^{\bar{J}_n}_1 \cap E^{\bar{J}_n}_3 \cap E^{\bar{J}_n}_5 |J = \bar{J}_n \big)  \\
&=	\mathbb{P}(J=\bar{J}_n) \mathbb{E}\big[P\big(E^{\bar{J}_n}_1 \cap E^{\bar{J}_n}_3 \cap E^{\bar{J}_n}_5|J = \bar{J}_n,\bm{K}_{\bar{J}_n} \big) \big]  \\
&=	\mathbb{P}(J=\bar{J}_n) \mathbb{E}\big[\mathbb{I}\{\bm{K}_{\bar{J}_n}\in E^{\bar{J}_n}_1\cap E^{\bar{J}_n}_5 \}\mathbb{P}\big(E^{\bar{J}_n}_3|J = \bar{J}_n,\bm{K}_{\bar{J}_n} \big) \big]  \\
&\ge	\mathbb{P}(J=\bar{J}_n) \mathbb{P}\big(E^{\bar{J}_n}_1 | J = \bar{J}_n\big)\inf_{ \|\bm{\theta}_0\|_\infty \le M} \mathbb{P}\big(\|\bm{\theta} - \bm{\theta}_0\|_\infty \le \bar{\epsilon}_n | J = \bar{J}_n \big) \\
& \gtrsim\exp\big( - (c_2+c_3) \bar{J}_n \log^{t_2\vee t_3} \bar{J}_n \big) \exp\big( - c_0(M) \bar{J}_n \log (1/{\bar{\epsilon}_n}) \big)\\
& \gtrsim \exp\big(- (c_0(M)+c_2+c_3)\bar{J}_n\log(1/\bar{\epsilon}_n)\big).
\end{align*}
\end{proof}

\begin{rmk}
If the range of the underlying curve $f_0$ is contained in some known interval $[a,b]\subset \mathbb{R}$, then, according to Lemma \ref{lemma2} and the proof of property (\ref{S3}), the prior on
$\bm{\theta}\in \mathbb{R}^j$ can be chosen to be supported on, say, $[a-1,b+1]^j$ so that (\ref{A3}) has to hold only for $\bm{\theta}_0\in [a-1,b+1]^j$.
Condition (\ref{C2}) will trivially be satisfied for $M_n> (1-a)\wedge(b+1)$.
\end{rmk}

\begin{rmk}
If (\ref{A4}) is assumed instead of (\ref{A2}), the proof of (\ref{S3}) can then be simplified a lot, as in this case one can condition on the event $\{\bm{K}_{\bar{J}_n}=\bar{\bm{k}}_{\bar{J}_n}\}$ so that $\bm{\theta}_0=\bm{\theta}_0(\bar{\bm{k}}_{\bar{J}_n})$ becomes fixed and $\mathbb{P}(E_1^J|J=\bar{J}_n,\bm{K}_{\bar{J}_n}=\bar{\bm{k}}_{\bar{J}_n}) =1$.
\end{rmk}

\begin{rmk}
\label{rem}
Condition (\ref{A0}) is used in the proof of Theorem \ref{theo} exclusively to enforce $\sum_{j=q}^{J} \mathbb{P}\big( J = j \big) \mathbb{P}\big( m(\bm{K}_j) < \delta(j) | J = j \big)$ to be zero.
Inspection of the proof shows, however, that it would suffice to require this sum to be upper-bounded by a multiple of $\exp\big\{-c_1 n\bar{\epsilon}_n^2 \big\}$.
Although this would be a weaker requirement, typically the sequence $\bar{\epsilon}_n$ will depend on the unknown smoothness $\alpha$.
Note however that since $\epsilon_n\ge\bar{\epsilon}_n$ and $\epsilon_n$ will obviously be taken to converge to 0, then for large enough $n$, $c_1 n\bar{\epsilon}_n^2 < n$.
This allows the term $\sum_{j=q}^{J} \mathbb{P}\big( J = j \big) \mathbb{P}\big( m(\bm{K}_j) < \delta(j) | J = j \big)$ to be absorbed into the remaining terms of the bound on $\mathbb{P}\big(s_{\bm{\theta}, \bm{K}_J} \not\in\mathcal{S}_n\big)$ in the proof.
Consequently, as claimed, Theorem \ref{theo} also holds if (\ref{A0'}) is assumed instead of (\ref{A0}).
\end{rmk}

\section[Implications]{Implications of the main result}
\label{application}

We clarify now the relevance of our result.
Consider a family of models $\mathcal{P} = \big\{P_f : f\in\mathcal{F}_{\mathcal{A}}\big\}$, $\mathcal{F}_{\mathcal{A}}=\cup _{\alpha \in \mathcal{A}}\mathcal{F}_{\alpha}$, with densities $p_f$ with respect to some common dominating measure.
Assume that we observe a sample $\bm{X}^{(n)}=(X_1,\ldots, X_n)\sim p_{f_0}^{(n)}$, $X_i \stackrel{\tiny ind}{\sim} p_{f_0}$, $f_0 \in \mathcal{F}_\alpha$ for some unknown smoothness $\alpha \in \mathcal{A}$.
The Bayesian approach consists of putting a prior measure $\Pi$ on $\mathcal{F}\subseteq\mathcal{F}_{\mathcal{A}}$ which, together with the likelihood $p_f^{(n)}$, 
leads to the posterior distribution $\Pi(\cdot| \bm{X}^{(n)})$ via Bayes' formula:
\[
\Pi\big(A|\bm{X}^{(n)}\big)= \frac{\int_A p_f^{(n)}(\bm{X}^{(n)})\, d\Pi(f)}{\int_{\mathcal{F}} p_f^{(n)}(\bm{X}^{(n)})\, d\Pi(f) }
\]
for a measurable $A\subseteq\mathcal{F}$.
The asymptotic behavior of the posterior distribution can be studied from the point of view of the probability measure $P_0=P_{f_0}$; see \citet{Ghosal:2000}.

For two densities $p_f$ and $p_g$ with $f, g \in \mathcal{F}_{\mathcal{A}}$, define the (squared) Hellinger metric $h^2(p_f, p_g)$ =
$2\big(1-\mathbb{E}_{g}\sqrt{p_f(X)/p_g(X)}\big)$, Kullback-Leibler divergence $K(p_f,p_g)$ = $-\mathbb{E}_g\log\big(p_f(X)/p_g(X)\big)$ and the Csisz\'ar f- divergence $V(p_f, p_g)$ = $\mathbb{E}_g\log^2\big(p_f(X)/p_g(X) \big)$.
Define also the ball $B(\epsilon_n,f_0)=\big\{f\in\mathcal{F}: K(f,f_0) \le \epsilon^2,V(f,f_0) \le \epsilon^2 \big\}$.

The following theorem is the main result  of \citet{Ghosal:2000} (for a version involving two sequences $\epsilon_n$ and $\bar\epsilon_n$ cf.\ also \citet{Ghosal:2001}) which makes a statement about the asymptotic behavior of a posterior measure.

\begin{thm}[Theorem 2.1 of \citet{Ghosal:2000}]
\label{theorem:contraction}
Suppose that for two positive sequences $\epsilon_n\ge \bar\epsilon_n$ such that $n\bar{\epsilon}_n^2>1$ and $\epsilon_n \rightarrow 0$ as $n\rightarrow \infty$, sets $\mathcal{F}_n\subseteq\mathcal{F}$ and constants $c_1,c_2,c_3,c_4>0$, the following conditions hold:
\begin{align}
\label{S4}
\log N \big(\epsilon_n,\mathcal{F}_n,h\big)	&	\le	c_1 n\epsilon_n^2,\\
\label{S5}
\Pi(\mathcal{F}\backslash\mathcal{F}_n)		&	\le	c_2 e^{-(c_3+4)n\bar{\epsilon}_n^2},\\
\label{S6}
\Pi(B(\bar{\epsilon}_n,f_0) )				&	\ge	c_4 e^{-c_3n\bar{\epsilon}_n^2}.
\end{align}
Then, for large enough $M>0$, $\Pi\big(f\in \mathcal{F}: h(p_f, p_{f_0})\ge M\epsilon_n | \bm{X}^{(n)}\big)\rightarrow 0$  as $n\to \infty$ in $P_{f_0}$-probability.
\end{thm}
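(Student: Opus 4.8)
The plan is to follow the now-classical argument of \citet{Ghosal:2000}: combine a lower bound for the ``evidence'' (the denominator of the posterior) coming from the prior-mass condition~(\ref{S6}) with an exponentially powerful test extracted from the entropy condition~(\ref{S4}), and then control the posterior mass of the complement of the Hellinger ball by splitting a numerator over the sieve and its complement. Write $A_n=\{f\in\mathcal{F}:h(p_f,p_{f_0})\ge M\epsilon_n\}$ and express
\[
\Pi\big(A_n|\bm{X}^{(n)}\big)=\frac{\int_{A_n}\prod_{i=1}^n\frac{p_f(X_i)}{p_{f_0}(X_i)}\,d\Pi(f)}{\int_{\mathcal{F}}\prod_{i=1}^n\frac{p_f(X_i)}{p_{f_0}(X_i)}\,d\Pi(f)}=:\frac{N_n}{D_n}.
\]
First I would bound $D_n$ from below. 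Restricting $\Pi$ to $B(\bar\epsilon_n,f_0)$ and renormalising, Jensen's inequality applied to $\log$ (moving the $n$-fold likelihood-ratio product inside the integral over $f$) together with a Chebyshev bound on the variance of the resulting centred sum of i.i.d.\ terms --- controlled by the $V$-divergence, which is $\le\bar\epsilon_n^2$ on $B(\bar\epsilon_n,f_0)$ --- gives the standard estimate that for every $C>0$
\[
P_{f_0}\big(D_n\le e^{-(1+C)n\bar\epsilon_n^2}\,\Pi(B(\bar\epsilon_n,f_0))\big)\le\frac{1}{C^2n\bar\epsilon_n^2}.
\]
Together with~(\ref{S6}) this yields $D_n\ge c_4e^{-(1+C+c_3)n\bar\epsilon_n^2}$ on an event $\Omega_n$ with $P_{f_0}(\Omega_n^{c})\to0$; I would fix $C=2$ once and for all.

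The crux is the construction of tests, and I expect this to be the main obstacle, since it is the only place where the geometry of the model (Hellinger affinities, minimax testing) is genuinely used. By the Le Cam--Birg\'e testing theory, the finiteness of the Hellinger metric entropy of $\mathcal{F}_n$ implies that for each integer $j\ge M$ there is a test $\phi_{n,j}$ of $p_{f_0}$ against the shell $S_{n,j}=\{f\in\mathcal{F}_n:j\epsilon_n<h(p_f,p_{f_0})\le(j+1)\epsilon_n\}$ with, for some universal $\gamma>0$,
\[
\mathbb{E}_{f_0}\phi_{n,j}\le N(\epsilon_n,\mathcal{F}_n,h)\,e^{-\gamma nj^2\epsilon_n^2},\qquad\sup_{f\in S_{n,j}}\mathbb{E}_f(1-\phi_{n,j})\le e^{-\gamma nj^2\epsilon_n^2};
\]
what really enters is the local covering number of $S_{n,j}$ at a scale comparable to $j\epsilon_n$, and this is crudely bounded by $N(\epsilon_n,\mathcal{F}_n,h)\le e^{c_1n\epsilon_n^2}$ via~(\ref{S4}). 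Setting $\phi_n=\sup_{j\ge M}\phi_{n,j}$, a union bound gives $\mathbb{E}_{f_0}\phi_n\le\sum_{j\ge M}e^{c_1n\epsilon_n^2}e^{-\gamma nj^2\epsilon_n^2}\lesssim e^{-(\gamma M^2-c_1)n\epsilon_n^2}$, which tends to $0$ once $M$ is large enough that $\gamma M^2>c_1$.

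Finally I would assemble the pieces. Splitting $A_n$ over $\mathcal{F}\setminus\mathcal{F}_n$ and $A_n\cap\mathcal{F}_n$ gives
\[
\Pi\big(A_n|\bm{X}^{(n)}\big)\le\phi_n+\frac{1}{D_n}\int_{\mathcal{F}\setminus\mathcal{F}_n}\prod_{i=1}^n\frac{p_f(X_i)}{p_{f_0}(X_i)}\,d\Pi(f)+\frac{1-\phi_n}{D_n}\int_{A_n\cap\mathcal{F}_n}\prod_{i=1}^n\frac{p_f(X_i)}{p_{f_0}(X_i)}\,d\Pi(f).
\]
Multiplying through by $\mathbb{I}_{\Omega_n}$, on which $1/D_n\le c_4^{-1}e^{(1+C+c_3)n\bar\epsilon_n^2}$, and taking $\mathbb{E}_{f_0}$, Fubini together with $\mathbb{E}_{f_0}\prod_i\frac{p_f}{p_{f_0}}(X_i)=1$ bounds the expectation of the second term by $c_4^{-1}e^{(1+C+c_3)n\bar\epsilon_n^2}\,\Pi(\mathcal{F}\setminus\mathcal{F}_n)\le(c_2/c_4)e^{-(3-C)n\bar\epsilon_n^2}$ via~(\ref{S5}); for the third term, $\mathbb{E}_{f_0}[(1-\phi_n)\prod_i\frac{p_f}{p_{f_0}}(X_i)]=\mathbb{E}_f(1-\phi_n)\le e^{-\gamma nj^2\epsilon_n^2}$ on $S_{n,j}$, so summing over the shells bounds its expectation by $c_4^{-1}e^{(1+C+c_3)n\bar\epsilon_n^2}\sum_{j\ge M}e^{-\gamma nj^2\epsilon_n^2}\lesssim e^{-(\gamma M^2-1-C-c_3)n\bar\epsilon_n^2}$ (using $\epsilon_n\ge\bar\epsilon_n$ and $n\bar\epsilon_n^2\ge1$). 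Taking $C=2$ and $M$ so large that $\gamma M^2>3+c_3$, these bounds together with $\mathbb{E}_{f_0}\phi_n\to0$ give $\mathbb{E}_{f_0}\big[\Pi(A_n|\bm{X}^{(n)})\,\mathbb{I}_{\Omega_n}\big]\to0$; since also $P_{f_0}(\Omega_n^{c})\to0$ and $\Pi(A_n|\bm{X}^{(n)})\le\Pi(A_n|\bm{X}^{(n)})\mathbb{I}_{\Omega_n}+\mathbb{I}_{\Omega_n^{c}}$, Markov's inequality yields the claimed convergence to $0$ in $P_{f_0}$-probability.
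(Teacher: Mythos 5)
The paper does not supply its own proof of this theorem: it is cited verbatim as Theorem~2.1 of Ghosal, Ghosh and van der Vaart (2000), with a pointer to Ghosal and van der Vaart (2001) for the two-sequence variant. So there is no in-paper argument to compare against; the relevant benchmark is the classical GGV proof, and your reconstruction of it is accurate in structure and in detail. You correctly isolate the three pillars of that argument: (i) the evidence lower bound $D_n\ge c_4 e^{-(1+C+c_3)n\bar\epsilon_n^2}$ on a high-probability event, obtained by applying GGV's Lemma~8.1 (Jensen plus Chebyshev against the $V$-divergence) to the prior restricted to $B(\bar\epsilon_n,f_0)$ and then invoking~(\ref{S6}); (ii) Le~Cam--Birg\'e tests over Hellinger shells $S_{n,j}$ intersected with the sieve, with type-I error controlled via the entropy bound~(\ref{S4}); and (iii) the split of the numerator over $\mathcal{F}\setminus\mathcal{F}_n$ (handled by Fubini and~(\ref{S5})) and $A_n\cap\mathcal{F}_n$ (handled by the type-II error of the tests, summed geometrically over shells). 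The bookkeeping with $C=2$ and $\gamma M^2 > 3 + c_3$ is the right calibration against the $-(c_3+4)$ in~(\ref{S5}).

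One point you elide deserves a flag: you assert $P_{f_0}(\Omega_n^c)\to 0$, but the Chebyshev bound gives only $P_{f_0}(\Omega_n^c)\le 1/(C^2 n\bar\epsilon_n^2)$, which tends to zero only if $n\bar\epsilon_n^2\to\infty$. The hypothesis as reproduced in the paper reads merely $n\bar\epsilon_n^2 > 1$, and under that alone the Chebyshev term, like the other exponential remainders $e^{-(3-C)n\bar\epsilon_n^2}$ and $e^{-(\gamma M^2-1-C-c_3)n\bar\epsilon_n^2}$, is bounded by a constant rather than vanishing, so the convergence in probability would not actually follow. The original GGV statement requires $n\epsilon_n^2\to\infty$, and in the paper's application (Theorem~3) one indeed has $n\bar\epsilon_n^2\sim n^{1/(2\alpha+1)}(\log n)^{2\alpha/(2\alpha+1)}\to\infty$, so nothing breaks downstream; but your write-up should state explicitly that the divergence $n\bar\epsilon_n^2\to\infty$ (implicit in the GGV formulation being cited) is what makes each of these terms go to zero.
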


The conditions of this theorem require the existence of a \emph{sieve} $\mathcal{F}_n$ with small entropy (\ref{S4}) which contains most of the prior mass (\ref{S5}) and which enough prior mass around the parameter $f_0$ which indexes the "true" underlying measure of the data.
Assume now that the models in $\mathcal{P}$ are such that for $d^2$ being $h^2$, $K$ or $V$, $d^2(p_f,p_{f_0})\lesssim \|f-f_0\|_2^2$.
If in addition one can prove that in the considered model $h(p_f,p_{f_0}) \gtrsim\|f-f_0\|_2$, then Theorem \ref{theorem:contraction} delivers a contraction rate $\epsilon_n$ with respect to the $L_2$-distance as well.
Some examples of models for which the above relations between norms can be established are, among others, density estimation, non-parametric regression, binary regression, Poisson regression and classification; cf.\ \citet{Ghosal:2000}, \citet{deJonge:2012}, \citet{Shen:2012}.
In this case one can apply our meta-theorem (Theorem \ref{theo}) to obtain an adaptive contraction rate which essentially verifies (\ref{S4})--(\ref{S6}) for our spline-based prior.
We summarize this in the following theorem.

\begin{thm}
\label{theo:main}
Let $\Pi$ be the spline prior described in Section \ref{sec:result}.
Consider a family of models $\mathcal{P} = \big\{P_f : f\in\mathcal{F}_{\mathcal{A}}\big\}$, $\mathcal{F}_{\mathcal{A}}=\cup _{\alpha \in \mathcal{A}}\mathcal{F}_{\alpha}$, with densities $p_f$ with respect to some common dominating measure.
Assume also that the models in $\mathcal{P}$ are such that for $d^2$ being $h^2$, $K$ or $V$, $d^2(p_f,p_{f_0})\lesssim \|f-f_0\|_2^2$.
Take an i.i.d.\ sample $\bm{X}^{(n)}=(X_1,\ldots, X_n)$, $X_i \sim p_{f_0}$, $f_0 \in\mathcal{F}_\alpha$, $\|f_0\|_\infty < M$, for some unknown smoothness $\alpha \in \mathcal{A}$.
Consider a prior $\Pi$ which verifies (\ref{A1-1}) through (\ref{A3}) for certain constants $c_1, c_2, c_3, t_1, t_2$ and $t_3$.
Assume also that either $\alpha\le 1$ or $t_2 \wedge t_3 =1$.

Then, for large enough $C>0$, $\Pi\big(f\in \mathcal{F}: h(p_f, p_{f_0})\ge C\epsilon_n | \bm{X}^{(n)}\big) \rightarrow 0$  as $n\to \infty$ in $P_0$-probability for $\epsilon_n = C_3 n^{-\alpha/(2\alpha+1)}(\log n)^{\alpha/(2\alpha+1) +(1- (t_1 \wedge t_3))/2}$.
If $h(p_f,p_{f_0}) \gtrsim \|f-f_0\|_2$ then in the previous statement the Hellinger distance may be replaced by the $L_2$ distance and the statement remains valid.
\end{thm}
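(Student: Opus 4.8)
The plan is to deduce the assertion from the general contraction theorem of \citet{Ghosal:2000} (Theorem \ref{theorem:contraction}) applied with the sieve $\mathcal{F}_n:=\mathcal{S}_n$ supplied by our meta-theorem, Theorem \ref{theo}. The first thing to notice is that, once suitable sequences have been chosen, the three conclusions (\ref{S1})--(\ref{S3}) of Theorem \ref{theo} translate essentially verbatim into the hypotheses (\ref{S4})--(\ref{S6}) of Theorem \ref{theorem:contraction}: using $d^2(p_f,p_{f_0})\lesssim\|f-f_0\|_2^2\le\|f-f_0\|_\infty^2$ for $d^2\in\{h^2,K,V\}$, a $\|\cdot\|_2$-net of $\mathcal{S}_n$ is, up to a fixed multiplicative constant, an $h$-net, so (\ref{S1})$\Rightarrow$(\ref{S4}); the sup-norm ball $\{\|s_{\bm\theta,\bm K_J}-f_0\|_\infty\le 2\bar\epsilon_n\}$ lies inside $B(c\bar\epsilon_n,f_0)$ for an appropriate $c$, so (\ref{S3})$\Rightarrow$(\ref{S6}); and (\ref{S2})$\Rightarrow$(\ref{S5}) after renaming constants. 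Thus the whole problem is to exhibit sequences $\epsilon_n\ge\bar\epsilon_n\to 0$ with $n\bar\epsilon_n^2>1$, together with $J_n,\bar J_n>q$, $M_n>0$ and $c_M\ge c_1$, for which (\ref{C1})--(\ref{C4}) hold with $\epsilon_n$ of the asserted order, and then to make sure the prior-determined constants in (\ref{S2})--(\ref{S3}) are good enough for the single constant that Theorem \ref{theorem:contraction} shares between (\ref{S5}) and (\ref{S6}).

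For the small-ball/approximation side I would take $\bar J_n:=\lceil(\bar\epsilon_n/(\tau^\alpha C_{f_0}))^{-1/\alpha}\rceil\asymp\bar\epsilon_n^{-1/\alpha}$, the smallest value allowed by the first half of (\ref{C4}); then $\log\bar J_n\asymp\alpha^{-1}\log(1/\bar\epsilon_n)$, and this step --- verifying the second half of (\ref{C4}), $\log^{t_2\vee t_3}\bar J_n\lesssim\log(1/\bar\epsilon_n)$ --- is where the standing assumption ``$\alpha\le 1$ or $t_2\wedge t_3=1$'' is invoked. With this choice, (\ref{S3}) reads $\Pi(B(c\bar\epsilon_n,f_0))\gtrsim\exp\{-(c_0(M)+c_2+c_3)\bar J_n\log(1/\bar\epsilon_n)\}$; requiring the exponent to be at most a constant multiple of $n\bar\epsilon_n^2$, i.e.\ solving $\bar\epsilon_n^{-1/\alpha}\log(1/\bar\epsilon_n)\asymp n\bar\epsilon_n^2$, pins $\bar\epsilon_n\asymp n^{-\alpha/(2\alpha+1)}(\log n)^{\alpha/(2\alpha+1)}$ and hence $n\bar\epsilon_n^2\asymp n^{1/(2\alpha+1)}(\log n)^{2\alpha/(2\alpha+1)}$.

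For the sieve/entropy side I would take $J_n$ to be the least integer satisfying the first inequality of (\ref{C2}) with a constant large enough that (\ref{A1-1}) delivers (\ref{S5}); since $J_n$ is then polynomial in $n$, $(\log J_n)^{t_1}\asymp(\log n)^{t_1}$, so $J_n\asymp n\bar\epsilon_n^2/(\log n)^{t_1}$ (in particular $J_n\ge\bar J_n$, as $t_1\le1$). I would then fix $M_n$ polynomially large enough that $P(\bm\theta\notin\mathcal{C}^j(M_n)\mid J=j)\lesssim\exp(-c_M n\bar\epsilon_n^2)$ for all $q\le j\le J_n$ --- available from an exponential tail bound of the type behind (\ref{A3}), and cheap since it keeps $\log M_n\lesssim\log n$ --- and I would assume (as holds for every prior in Section \ref{examples}) that $\delta(\cdot)$ decays at most polynomially, so $\log(1/\delta(J_n))\lesssim\log n$. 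Then (\ref{C1}) collapses to $J_n\log n\lesssim n\epsilon_n^2$, so the smallest admissible $\epsilon_n$ is of order $(J_n\log n/n)^{1/2}\asymp\bar\epsilon_n(\log n)^{(1-t_1)/2}$; since $(1-t_1)/2\le(1-(t_1\wedge t_3))/2$ this is $\lesssim n^{-\alpha/(2\alpha+1)}(\log n)^{\alpha/(2\alpha+1)+(1-(t_1\wedge t_3))/2}$, and $\epsilon_n\ge\bar\epsilon_n$, $\epsilon_n\to 0$, $n\bar\epsilon_n^2>1$ are all immediate. Theorem \ref{theorem:contraction} now yields the Hellinger statement; and if additionally $h(p_f,p_{f_0})\gtrsim\|f-f_0\|_2$, then $\{\|f-f_0\|_2\ge C'\epsilon_n\}\subseteq\{h(p_f,p_{f_0})\ge cC'\epsilon_n\}$, so the posterior contracts at the same rate in $L_2$.

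The step I expect to be the main obstacle is not any individual estimate but the simultaneous calibration of constants. The exponents in (\ref{S2}) and (\ref{S3}) carry constants ($c_1$ on the one hand; $c_0(M),c_2,c_3$ on the other) that are fixed by the prior, whereas Theorem \ref{theorem:contraction} couples the exponents of (\ref{S5}) and (\ref{S6}) through one common constant; reconciling these is exactly what forces $\bar\epsilon_n$, and then $\epsilon_n$, to be taken a sufficiently large multiple of the natural rate, and possibly forces $J_n$ up by a further constant factor --- adjustments that leave the displayed order of $\epsilon_n$ untouched. A secondary, routine point is that $\delta(\cdot)$ and $M_n$ enter (\ref{C1}) only through logarithms, so one has to verify (or, as here, assume) that they are at most polynomial, which is immediate for the concrete priors of Section \ref{examples}.
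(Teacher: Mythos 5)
Your proposal is correct and mirrors the paper's own proof essentially step for step: translate (\ref{S1})--(\ref{S3}) into (\ref{S4})--(\ref{S6}) via the assumed norm comparisons, take $\bar J_n\asymp\bar\epsilon_n^{-1/\alpha}$ (the minimum allowed by (\ref{C4})), solve $\bar J_n\log(1/\bar\epsilon_n)\asymp n\bar\epsilon_n^2$ for $\bar\epsilon_n\asymp(\log n/n)^{\alpha/(2\alpha+1)}$, take $J_n\asymp n\bar\epsilon_n^2/(\log n)^{t_1}$, $M_n$ and $1/\delta(\cdot)$ polynomial, and read off $\epsilon_n\asymp\bar\epsilon_n(\log n)^{(1-t_1)/2}$ from (\ref{C1}). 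Your extra remarks (that $J_n\ge\bar J_n$ follows from $t_1\le 1$, that the obtained exponent $(1-t_1)/2$ is in fact no worse than the stated $(1-(t_1\wedge t_3))/2$, and that the fixed prior constants $c_0(M),c_1,c_2,c_3$ versus the single coupled constant $c_3$ in Theorem~\ref{theorem:contraction} are reconciled by taking $C_1,C_2,C_3$ large enough) are exactly the tacit bookkeeping in the paper's proof, made explicit.
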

\begin{proof}
We have that for some constant $k>0$ and $\mathcal{F}=\mathcal{S}$, $\mathcal{F}_n=\mathcal{S}_n$,
\begin{align*}
N \big(\epsilon_n,\mathcal{F}_n,h\big)	&	\le	N(\epsilon_n/k, \mathcal{F}_n,\|\cdot\|_2),\\
\Pi(\mathcal{F}\backslash\mathcal{F}_n)	&	=	P\big(s_{\bm{\theta}, \bm{K}_J}  \not\in \mathcal{F}_n\big),\\
\Pi(B(\bar{\epsilon}_n,f_0) )			&	\ge	P\big(\|s_{\bm{\theta}, \bm{K}_J} - f_0\|_\infty \le \bar{\epsilon}_n/k \big).
\end{align*}
The first inequality follows from the fact that by assumption $h(p_f,p_g) \le k \|f-g\|_2$ and so an $\epsilon/k$ cover of $ \mathcal{F}_n$ according to $\|\cdot\|_2$ induces an $\epsilon$ cover of $ \mathcal{F}_n$ according to $h$.
Then, since for $d^2$ being $K$ or $V$, $d^2(p_f,p_{f_0})\le k \|f-f_0\|_2^2$, we have $B(\bar{\epsilon}_n,f_0)\supset \big\{f\in\mathcal{F}: \|f-f_0\|_2 \le \epsilon/k \big\}$ and the last inequality follows.

By assumption $f_0\in\mathcal{F}_\alpha$ satisfies the conditions of Theorem \ref{theo}; assume (\ref{Ap}) holds for some $C_{f_0}$.
Consider then a prior that satisfies (\ref{A1-1})--(\ref{A3}).
Let us present a choice of quantities $M_n$, $\delta(j)$, $J_n$, $\bar{J}_n$, $\epsilon_n$ and $\bar{\epsilon}_n$  which meet conditions (\ref{C1})--(\ref{C4}).
First of all, sequence $M_n$ can be taken as a polynomial in $n$ (for instance, for normal or exponential conditional priors for $\bm{\theta}\in \mathbb{R}^j$ in (\ref{C2})) and $1/\delta(j)$ as a polynomial in $j$.
Next, note that there is no $\bar{J}_n$ that satisfies (\ref{C4}) unless $\alpha\le 1$  or $t_2 \wedge t_3 =1$. 
If either $\alpha>1$ or $t_2 \wedge t_3 < 1$, then the best possible choices are $\bar{J}_n = \tau C_{f_0}^{1/\alpha} (\bar{\epsilon}_n)^{-1/\alpha}$, $\bar{\epsilon}_n = C_1 (\log n/n)^{\alpha/(2\alpha+1)}$ for sufficiently large $C_1$, $J_n = C_2 n^{1/(2\alpha+1)} (\log n)^{2\alpha/(2\alpha+1) - t_1}$ for sufficiently large $C_2$, and finally, 
\[
\epsilon_n = C_3 n^{-\alpha/(2\alpha+1)}(\log n)^{\alpha/(2\alpha+1) +(1- t_1)/2}
\]
for sufficiently large $C_3$.
Since these quantities satisfy (\ref{C1})--(\ref{C4}), Theorem \ref{theo} implies  conditions (\ref{S1})--(\ref{S3})  for the quantities defined above.
Finally,  applying Theorem  \ref{theorem:contraction}, we conclude that the contraction rate of the resulting posterior is at most $\epsilon_n$, which appears to be optimal (up to a logarithmic factor) in a minimax sense over the H\"older class $\mathcal{H}_\alpha$ (also over $\alpha$-smooth Sobolev class).

\end{proof}

\begin{rmk}
A priori, it may be unknown whether $\alpha>1$ or not, or it may be simply known that $\alpha\le 1$. 
We can however always ensure the condition $t_2 \wedge t_3 < 1$ by an appropriate choice of prior. For example, we take a geometric prior on $J$ so that $t_2=0$ and a prior on $\bm{K}_j$ such that (\ref{A4}) (which implies (\ref{A2})) holds  with, say, $t_3=0$.
\end{rmk}

\begin{rmk}
The common practice, in applications, of endowing the location of the knots with a Poisson point process prior results in a prior that does not verify assumption (\ref{A0}).
Assumption (\ref{A0'}), however, permits this so long as a large enough point mass is placed at an equally spaced knot vector.
This very simple modification assures that our Theorem \ref{theo:main} may be applied to show that  these priors result in a rate adaptive posteriors.
\end{rmk}

\section[Examples]{Examples of Priors}
\label{examples}

We give now examples of particular choices for the several components of our hierarchical prior which verify conditions (\ref{A1-1}) through (\ref{A3}) and (\ref{A0'}).

As for the prior on the number of basis functions, assumptions (\ref{A1-1}) and (\ref{A1-2}) hold for the geometric, Poisson and negative binomial distributions; cf.\ \citet{Shen:2012}.
Assumption (\ref{A3}), on the other hand, will trivially hold if we assume, for example, the coordinates of $\bm{\theta}\in\mathbb{R}^j$ to be (conditionally on $J=j$) independent and identically distributed according to a density uniformly bounded away from zero on the interval $[-M,M]$.

There is an ample choice of priors on $\bm{K}_J$, given $J=j$, which satisfy condition (\ref{A0}).
First note that this condition enforces the prior on the location of the knots, for each $J=j$, to be such that, with probability 1, adjacent knots are at least $\delta(j)$ apart.
The function $1/\delta(j)$ can be taken as a polynomial in $j$ of high degree which makes the requirement less restrictive.
If a certain sequence $\epsilon_n$ verifies the conditions of Theorem \ref{theo} then an increase in the exponent of $1/\delta(j)$ can be accommodated by making $\epsilon_n$ larger by a multiplicative factor (cf.~condition (\ref{C1}).)

A simple choice for the prior on $\bm{K}_J$, given $J=j$, is to pick $(j-q)$ knots uniformly at random, without replacement, on a uniform $\delta(j)$-sparse grid.
This construction is possible if $\delta$ is chosen in such a way that $\lfloor 1/\delta(j)\rfloor>j-q$ for all $j$.
Another example is to take, for each $j$, the $(j-q)$ inner knots in $\bm{K}_j$ to be generated sequentially in the following way:
add a knot $K_1$ uniformly at random on the interval $[\delta(j),1-\delta(j)]$, then a knot $K_2$ uniformly at random on the interval $[\delta(j),1-\delta(j)]\backslash (K_1-\delta(j), K_1+\delta(j))$ 
and so on.
Finally, take the ordered $\bm{K}_j=(K_{(1)}, \dots, K_{(j-q)})$.
This construction is always possible if $1/\delta(j)$ grows faster than $2(j-q+1)$.
(If $J$ is Poisson distributed, these points are simply distributed like a homogeneous Poisson process, conditioned to have all points at least $\delta(J)$ apart.)
Note that for this construction, the probability $\mathbb{P}\big( m(\bm{K}_j) > \delta(j) | J = j \big)$ is at least $(1-2(j-q)\delta(j))^{j-q}$ which is very close to one if $j$ is large and $1/\delta(j)$ is a large power of $j$, say.
Clearly, condition (\ref{A0}) is satisfied for these two constructions since all prior mass is concentrated on partitions with sparseness larger than $\delta(j)$.

It is also easy to see that condition (\ref{A2}) is verified for the knot vectors obtained from one of these two constructions.
In fact, condition (\ref{A2}) is trivially fulfilled if, for some $0\le t_3 <1$,
\begin{equation}
\label{A4}
\mathbb{P}(\bm{K}_j= \bar{\bm{k}}_j)	\gtrsim	\exp\big(- c_3 j\log^{t_3}j\big), 
\end{equation}
where $\bar{\bm{k}}_j\in\mathcal{K}_j$ is the set of $(j-q)$ equally spaced inner knots.
This suggests a mechanism to assure that any prior which verifies (\ref{A0}) can be slightly modified to also verify (\ref{A2}):
given $J=j$, generate a Bernoulli random variable $X$ with success probability, say, $\exp(- c_3 j\log^{t_3}j)$;
if $X=1$, then take $\bm{K}_j = \bar{\bm{k}}_j$, otherwise pick the knots in $\bm{K}_j$ according to any procedure which verifies (\ref{A0}), for instance one of two procedures described above.
The resulting prior will trivially satisfy both (\ref{A0}) and (\ref{A2}).

Condition (\ref{A0}) necessarily excludes some partitions from the support of the prior (and then also from the support of the posterior.)
As mentioned before very few partitions will be excluded so long as $1/\delta(j)$ is a large enough power of $j$.
It is nonetheless of interest to design a weaker alternative for condition (\ref{A0}).
Condition $(\ref{A0'})$ plays this role, in that it allows priors on $\bm{K}$ which have \emph{any} partition of $[0,1]$ into non-empty intervals in its support.

Assuming condition (\ref{A0'}) instead of (\ref{A0}) consequently allows us to put positive mass on any vector of simple knots in a straightforward way:
generate a Bernoulli random variable with success probability $1-c_5\exp(-c_4 n)$;
if $X=1$ take $\bm{K}_j = \bar{\bm{k}}_j$, equally spaced;
if $X=0$ then take an arbitrary $\bm{K}_j$ (for example independent, uniformly distributed points on $[0,1]$.)
So long as we take $1/\delta(j)=j$ and $\tau \ge q$ then conditions (\ref{A0'}) and (\ref{A2}) are verified.
This procedure, although simpler, does place little prior mass on knot vectors with inhomogeneous distributions.

An alternative, less degenerate prior, which verifies (\ref{A0'}) and (\ref{A2}) can be obtained in the following way:
given $J=j$, first, generate a Bernoulli random variable $X_1$ with success probability $c_5\exp(-c_4 n)$;
if $X_1=1$ distribute the $(j-q)$ knots arbitrarily;
if $X_1=0$ then generate another Bernoulli random variable $X_2$ with success probability, $\exp(-j)$;
if $X_2=1$ then take $(j-q)$ equally spaced knots $\bar{\bm{k}}_j$;
If $X_2=0$, then place the knots such that (\ref{A0}) is verified.
This procedure should allow good control on the prior on the knots while not excluding any knot vectors.

Note that the priors described above which verify (\ref{A1-1}) through (\ref{A3}) do not depend on the sample size $n$, as prescribed by the Bayesian paradigm.
Condition (\ref{A0'}) is a weaker requirement then condition (\ref{A0}) but it will, introduce a dependence on the sample size $n$ in the prior.

\section[Technical results]{Technical results}
\label{aux}

In this section we collect some technical results.
Lemmas \ref{lemma0} and \ref{lemma1} are needed to bound the entropy number of the sieves $\mathcal{S}_n$ in Theorem \ref{theo}.
Lemma \ref{lemma2} claims in essence that if some bounds on the range of the function $f_0$ are known, then this knowledge can be incorporated into the prior on the coefficients $\bm{\theta}$.

Theorem 4.26 of \citet{Schumaker:2007} claims that if all the inner knots of a B-spline are simple, then the B-spline is continuous, uniformly over its support, with respect to its knots.
In Lemma \ref{lemma1} we establish a slightly stronger result (a Lipschitz-type property):
if we take two splines with the same coefficients in their respective B-spline basis, then the $L_\infty$ distance between the splines can be bounded by a multiple of the $l_\infty$ distance between the two sets of knots, as long as the sets of knots are sufficiently sparse.
First, we present a preliminary lemma.
Denote the $(r+1)$-th order divided difference of a function $h$ over the points $t_1, \dots, t_{r+1}$ as $[t_1, \dots, t_{r+1}]h = ([t_2, \dots, t_{r+1}]h - [t_1, \dots, t_r]h) / (t_{r+1}-t_1)$, with $[t_i]h=h(t_i)$. If $t_1 = \dots = t_{r+1}$ then $[t_1, \dots, t_{r+1}]h = h^{(r)}(t_1) / r!$ for a function $h$ with enough derivatives at $t_1$.

\begin{lem}
\label{lemma0}
Let $i\in\{1, \dots, r\}$, $r\ge 2$, $(k_1, \dots, k_{r+1}) \in (0,1)^{r+1}$.
Assume $k_{v+1}-k_v > \delta >0$ for $v=0,\dots,i-1,i+1,\dots,r$ and $k_{i+1} - k_i = 0$.
For fixed $x\in[0,1]$ take the function $h(y) = (x-y)_+^{q-1}$ with $y\in[0,1]$ and $q\ge 2$.
Then the divided difference $\big|[k_1, \dots, k_{r+1}]h\big| \le 4/\delta^r$ for $x\neq k_i$.
\end{lem}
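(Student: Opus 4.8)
The plan is to estimate the divided difference $[k_1,\dots,k_{r+1}]h$ by exploiting the coalescence $k_{i+1}=k_i$ together with the recursive definition of divided differences, so that the problematic factor $1/(k_{i+1}-k_i)$ never actually appears. First I would recall the standard fact (e.g.\ from \citet{Schumaker:2007} or \citet{deBoor:1978}) that a divided difference over points with a repeated node can be written via a limit, and more usefully that it admits the Leibniz/mean-value representation: since $h(y)=(x-y)_+^{q-1}$ is, for $x\neq k_i$, smooth in a neighbourhood of $y=k_i$ (being either identically $0$ there or a genuine polynomial of degree $q-1$ there, depending on whether $k_i<x$ or $k_i>x$), the confluent divided difference over the doubled node $k_i=k_{i+1}$ is controlled by $h$ and $h'$ evaluated near $k_i$. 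Concretely, I would use the recursion that collapses the two equal nodes: $[k_1,\dots,k_{r+1}]h$ can be expanded along the remaining simple nodes, each step dividing by a gap of the form $k_u - k_v$ with $u\neq i$ or $v\neq i+1$ chosen so that the gap is a sum of consecutive spacings, hence at least $\delta$.

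The key steps, in order: (1) Write $[k_1,\dots,k_{r+1}]h$ using the Newton/recursive formula, grouping the computation so that at the level where the doubled node would force division by $k_{i+1}-k_i=0$ we instead invoke the confluent form $[\dots,k_i,k_i,\dots]h$, which by Taylor's theorem equals a bounded combination of $h$ and $h'$ at $k_i$ (or, more robustly, of $h$ at $k_i$ and at a nearby simple node). (2) Bound $|h|\le 1$ and $|h'|\le q-1$ on $[0,1]$, since $h(y)=(x-y)_+^{q-1}$ has values in $[0,1]$ and $|h'(y)|=(q-1)(x-y)_+^{q-2}\le q-1$; here the hypothesis $x\neq k_i$ guarantees we are not differentiating at the kink. (3) Track the denominators: an $(r+1)$-point divided difference after removing one coalescence reduces to a bounded sum of divided differences of $h$ over at most $r$ of the points $k_1,\dots,k_{r+1}$, each such lower-order divided difference being a telescoping combination whose denominators are sums of consecutive gaps, each $\ge\delta$ by assumption. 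Counting these contributions gives $|[k_1,\dots,k_{r+1}]h|\le (\text{small constant})/\delta^{r}$, and a careful bookkeeping of the constant — using that there are $r$ nodes other than the repeated pair and that each reciprocal gap is at most $1/\delta$ while the combinatorial coefficients telescope — yields the explicit bound $4/\delta^{r}$.

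An alternative, perhaps cleaner, route is the Peano kernel / integral representation: $[k_1,\dots,k_{r+1}]h = \frac{1}{r!}\int h^{(r)}(t)\,\mu(dt)$ where $\mu$ is a signed measure of total variation controlled by the $B$-spline-type normalization, but since $h=(x-y)_+^{q-1}$ this is exactly how $B$-splines arise and one would need $q-1\ge r$ for $h^{(r)}$ to be integrable; because in the intended application $r$ is related to $q$, one can check the regime and fall back on the divided-difference recursion otherwise. I expect the main obstacle to be purely bookkeeping: organizing the recursion so that the division by the vanishing gap $k_{i+1}-k_i$ is genuinely avoided (rather than cancelled after the fact), and then pinning down the numerical constant to be exactly $4$ rather than some larger $r$-dependent quantity. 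The role of the hypothesis $x\neq k_i$ is precisely to ensure $h$ is $C^\infty$ in a neighbourhood of the doubled node so that the confluent divided difference is finite and small; this should be flagged explicitly in the write-up.
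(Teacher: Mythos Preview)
Your overall strategy---reduce to first-order divided differences, handle the doubled node via the confluent form $[k_i,k_i]h=h'(k_i)$ (valid precisely because $x\neq k_i$ makes $h$ smooth there), and then control all remaining denominators by the spacing $\delta$---matches the skeleton of the paper's argument. Where the paper diverges from your plan is in the execution of your step~(3): rather than unwinding the Newton recursion by hand, the paper invokes Theorem~2.56 of \citet{Schumaker:2007}, which bounds $|[k_1,\dots,k_{r+1}]h|$ by $\sum_{v=0}^{r-1}\binom{r-1}{v}\,|[k_{v+1},k_{v+2}]h|\,/\,(\gamma_2\cdots\gamma_r)$ with $\gamma_j=\min_v|k_{v+j}-k_v|\ge(j-1)\delta$. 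With every first-order difference already bounded by $2/\delta$, this gives $2^{r-1}\cdot(2/\delta)/\big((r-1)!\,\delta^{r-1}\big)=2^r/\big((r-1)!\,\delta^r\big)\le 4/\delta^r$ in one line, since $2^r/(r-1)!\le 4$ for every $r\ge 2$. Your proposal leaves this as ``careful bookkeeping''; without Schumaker's identity the raw recursion produces $2^{r-1}$ terms and it is not obvious that you recover the factorial in the denominator that tames the constant down to $4$ rather than something exponential in $r$. A second, smaller point: the paper converts $|h'(k_i)|\le q-1$ into $\le 1/\delta$ (using that the knots lie in $(0,1)$, which forces $\delta$ to be small), so that the confluent first difference sits on the same $1/\delta$ scale as the ordinary ones; your bound $|h'|\le q-1$ would leave a stray $q$-dependent factor in the final estimate. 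The Peano-kernel alternative you sketch is not used by the paper and, as you yourself note, does not apply without an assumed relation between $r$ and $q$.
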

\begin{proof}
Notice that $|h'(y)| = (q-1)(x-y)_+^{q-2} \le (q-1)\le 1/\delta$ for $x \neq y$, as $q \ge 2$ and thus $\delta  < k_2-k_1 < 1 \le \frac{1}{q-1}$.
Next, if $v=i-1$, $\big|[k_{v+1}, k_{v+2}]h\big| = |h'(k_{v+1})| \le 1/\delta$;
if $v\neq i-1$, $\big|[k_{v+1}, k_{v+2}]h\big| = |h(k_{v+2})-h(k_{v+1})|/|k_{v+2}-k_{v+1}| \le 2/\delta$.
We conclude $\big|[k_{v+1}, k_{v+2}]h\big|\le 2/\delta$ as long as $x\neq k_i$.

For $j=2, \dots, r$, define $\gamma_j = \min_{v=1, \dots, r+1-j} |k_{v+j}-k_v| \ge (j-1) \delta$.
Now we make use of Theorem 2.56 from \citet{Schumaker:2007} and the previous bound:
\[
\big|[k_1, \dots, k_{r+1}]h\big| \le \sum_{v=0}^{r-1} {r-1\choose v} 
\frac{\big|[k_{v+1}, k_{v+2}]h\big|}{\gamma_2 \dots \gamma_r} 
\le \frac{2^r}{(r-1)! \delta^r} \le \frac4{\delta^r}
\]
holds for all $x\neq k_i$. This completes the proof of the Lemma.

\end{proof}

\begin{lem}
\label{lemma1}
Let $\bm{\theta}\in\mathbb{R}^j$ satisfies $\|\bm{\theta}\|_\infty\le M$ and let $\bm{k},\bm{k}'\in \mathcal{K}^\delta_j = 
\{\bm{k}\in\mathcal{K}_j: m(\bm{k}) > \delta\}$ be such that $\|\bm{k}-\bm{k}'\|_\infty \le \delta$.
Then $\|s_{\bm{\theta},\bm{k}} - s_{\bm{\theta},\bm{k}'}\|_\infty \le L\|\bm{k}-\bm{k}'\|_\infty$, for $L=4 j (q+1) M\delta^{-(q+1)}$.
\end{lem}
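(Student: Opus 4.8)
I would first reduce, via the triangle inequality, to a pointwise comparison of individual B-splines, and then control each such difference by differentiating the B-spline along the straight-line path joining the two knot vectors. Fix $x\in[0,1]$. Since both splines use the same coefficient vector $\bm{\theta}$ with $\|\bm{\theta}\|_\infty\le M$, the triangle inequality gives
\[
\big|s_{\bm{\theta},\bm{k}}(x)-s_{\bm{\theta},\bm{k}'}(x)\big|\le M\sum_{i=1}^j\big|B_i^{\bm{k}}(x)-B_i^{\bm{k}'}(x)\big|,
\]
so it suffices to bound each $\big|B_i^{\bm{k}}(x)-B_i^{\bm{k}'}(x)\big|$ by $4(q+1)\delta^{-(q+1)}\|\bm{k}-\bm{k}'\|_\infty$ and then sum the $j$ terms. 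Put $\bm{k}(s)=(1-s)\bm{k}+s\bm{k}'$ for $s\in[0,1]$. Because $\{\bm{u}:m(\bm{u})>\delta\}$ is convex, $\bm{k}(s)\in\mathcal{K}_j^\delta$ for every $s$, so the inner knots of $B_i^{\bm{k}(s)}$ stay $\delta$-separated along the whole path while the boundary knots at $0$ and $1$ remain fixed; this is precisely the configuration for which Lemma~\ref{lemma0} is tailored. By continuity of B-splines in their simple knots, $s\mapsto B_i^{\bm{k}(s)}(x)$ is continuous, and I would write $B_i^{\bm{k}}(x)-B_i^{\bm{k}'}(x)=-\int_0^1\tfrac{d}{ds}B_i^{\bm{k}(s)}(x)\,ds$.

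To estimate the integrand I would use the divided-difference representation of the B-spline: up to a sign depending only on $q$, $B_i^{\bm{k}}(x)=(k_i-k_{i-q})\,[\,k_{i-q},\dots,k_i\,]h_x$ with $h_x(y)=(x-y)_+^{q-1}$, the divided difference being taken in the knot variable. Differentiating in an inner knot $t_v$ and using the rule $\partial_{t_v}[\,k_{i-q},\dots,k_i\,]h_x=[\,k_{i-q},\dots,t_v,t_v,\dots,k_i\,]h_x$ (valid for $x\ne t_v$, where $h_x$ is $C^\infty$), Lemma~\ref{lemma0} applied with $r=q+1$ bounds this by $4\delta^{-(q+1)}$; the contribution of differentiating the prefactor is at most $|B_i^{\bm{k}}|/(k_i-k_{i-q})\le\delta^{-1}$, since the span $k_i-k_{i-q}$ exceeds $m(\bm{k}(s))>\delta$ (also at the two ends, using $k_1>\delta$ and $1-k_{j-q}>\delta$). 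As at most $q+1$ of the knots entering $B_i$ actually move, each by a distance $\le\|\bm{k}-\bm{k}'\|_\infty$, the chain rule gives, for all but finitely many $s$ (those at which a moving knot passes through $x$), $\big|\tfrac{d}{ds}B_i^{\bm{k}(s)}(x)\big|\lesssim(q+1)\delta^{-(q+1)}\|\bm{k}-\bm{k}'\|_\infty$. Integrating over $s$ and summing over $i$ then delivers a constant of the stated form $L=4j(q+1)M\delta^{-(q+1)}$ once the elementary numerical factors are collected; the boundary B-splines, near which $h_x$ is smooth for interior $x$, are handled by the same computation.

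The part I expect to be the real obstacle is keeping the intermediate knot configurations $\delta$-separated — this is what keeps Lemma~\ref{lemma0}, and hence the power $\delta^{-(q+1)}$, applicable — which is exactly why the argument should run along the straight line in knot space and not via a ``move one knot at a time'' telescoping, since the latter would generically open up arbitrarily small gaps at the interface between already-moved and not-yet-moved knots. The rest is bookkeeping: verifying that $s\mapsto B_i^{\bm{k}(s)}(x)$ is absolutely continuous (continuous by the simple-knot dependence of B-splines on their knots, and $C^1$ between the finitely many values of $s$ at which a moving knot crosses $x$), that the node-differentiation identity for divided differences is legitimate for $h_x\in C^{q-1}$ away from $x$, and that the finitely many excluded $s$ are harmless because they form a null set inside the $ds$-integral while $x$ is held fixed — so that the final estimate is a genuine $\|\cdot\|_\infty$ bound, not merely an almost-everywhere one.
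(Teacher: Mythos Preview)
Your argument is sound and takes a genuinely different route from the paper. Ironically, the paper proceeds exactly by the device you warn against: it telescopes through $\bm{k}^l=(k'_1,\dots,k'_l,k_{l+1},\dots,k_{j-q})$, moving one inner knot at a time, and then bounds each $\|B_i^{\bm{k}^l}-B_i^{\bm{k}^{l+1}}\|_\infty$ via the mean-value theorem and Lemma~\ref{lemma0} (invoking Theorem~4.27 of Schumaker for the knot-derivative). Your separation concern applies squarely to that path: the interface gaps $t-k'_l$ and $k_{l+2}-t$, with $t$ the intermediate value of the moving $(l{+}1)$-th knot, are only guaranteed to be positive, not $>\delta$, because $\|\bm{k}-\bm{k}'\|_\infty\le\delta$ exactly cancels against $m(\bm{k})>\delta$; so the hypotheses of Lemma~\ref{lemma0} are not literally met along the one-knot-at-a-time path. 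Your straight-line interpolation sidesteps this cleanly, since $\{\bm{u}:m(\bm{u})>\delta\}$ is convex and hence $m(\bm{k}(s))>\delta$ for every $s$, making each invocation of Lemma~\ref{lemma0} legitimate. What the paper's route buys is lighter bookkeeping (a single moving knot per step, the constant $4$ falling out directly); what yours buys is that the crucial separation hypothesis is preserved throughout. The extra prefactor term $|B_i|/(k_i-k_{i-q})\le\delta^{-1}$ you pick up only perturbs the numerical constant, which is irrelevant for Theorem~\ref{theo}, where only the form $L\asymp jM\delta^{-(q+1)}$ is used.
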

\begin{proof}
Define $\bm{k}^l =(k^l_1,\ldots, k^l_{j-q})= (k'_1, \dots,k'_l,k_{l+1},\dots, k_{j-q})$ for $l=0,\dots, j-q$, such that $\bm{k}^0=\bm{k}$ and $\bm{k}^{j-q}=\bm{k}'$.
We get
\begin{align*}
\big\|s_{\bm{\theta},\bm{k}} & - s_{\bm{\theta},\bm{k}'}\big\|_\infty =\Big\| \sum_{i=1}^j \theta_i B_i^{\bm{k}^0} - \sum_{i=1}^j \theta_i B_i^{\bm{k}^{j-q}}\Big\|_\infty \le M\Big\|\sum_{i=1}^j (B_i^{\bm{k}^0} - B_i^{\bm{k}^{j-q}})\Big\|_\infty\\
&\le j M\max_{1\le i\le j}\big\|B_i^{\bm{k}^0} - B_i^{\bm{k}^{j-q}}\big\|_\infty\le j M\max_{1\le i\le j}\sum_{l=0}^{j-q-1} \big\|B_i^{\bm{k}^l} - B_i^{\bm{k}^{l+1}}\big\|_\infty\\
&\le	(q+1)  j M\max_{1\le i\le j} \max_{0\le l\le  j-q-1} \big\|B_i^{\bm{k}^l} - B_i^{\bm{k}^{l+1}}\big\|_\infty,
\end{align*}
The last inequality follows from (\ref{local_support}) and the fact that the inner knots of $B_i^{\bm{k}^l}$ and $B_i^{\bm{k}^{l+1}}$ differ only at the $(l+1)$-th entry. 

Theorem 4.27 of \citet{Schumaker:2007} gives explicit expressions for the derivative of a B-spline with respect to one of its knots.  These expressions are in terms of the divided differences which satisfy the conditions of Lemma \ref{lemma0}, so that combining this with Lemma \ref{lemma0} for $r=q+1$ (the maximal number of knots in the support of a B-spline) yields that this derivative is bounded in absolute value by $4\delta^{-(q+1)}$, except at $x=k_{l+1}^l$, where it is not defined.
Then, as $\|\bm{k}^l-\bm{k}^{l+1}\|_\infty \le \|\bm{k}-\bm{k}'\|_\infty$, we obtain that, for $x\not=  k_{l+1}^l$, $l=0,\ldots, j-q-1$,
\[
\big|B_i^{\bm{k}^l}(x) - B_i^{\bm{k}^{l+1}}(x)\big| \le |k_{l+1}^{l+1} -k_{l+1}^l |\sup_{k_{l+1}^l\in(0,1)}\Big| \frac{\partial B_i^{\bm{k}^l}(x)}{\partial k_{l+1}^l}\Big| \le\frac{4 \|\bm{k}-\bm{k}'\|_\infty}{\delta^{q+1}}.
\]
Since splines are continuous for all $q>1$, so is $s_{\bm{\theta},\bm{k}} - s_{\bm{\theta},\bm{k}'}$ and we conclude that the same bound must also hold for $x=k_{l+1}^l$.
Combining the above two relations concludes the proof.

\end{proof}

The properties of  B-splines allow to relate the range of the coefficients of the approximating spline to the range of the approximated function.
The following lemma generalizes Lemma 1 of \citet{Shen:2012} for non-equally spaced knots.

\begin{lem}
\label{lemma2}
Let $f\in \mathcal{F}^{\alpha}$ (so that (\ref{Ap}) holds), $a< b$,  $\varepsilon>0$.
Assume that  $f(x)\in [a+\varepsilon,b-\varepsilon]$ for all $x\in[0,1]$. 
Then there exits a positive constant $\delta=\delta(\mathcal{F}^{\alpha}, \varepsilon)$ such that for any $\bm{k}\in\mathcal{K}_j$, $j\ge q$, such that $M(\bm{k})\le\delta$, the coefficients $\bm{a}$ of the approximating spline $s_{\bm{a},\bm{k}}$ in (\ref{Ap}) can be taken to be contained in $(a,b)$.
\end{lem}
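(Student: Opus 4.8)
The plan is to exploit the dual functionals of the B-spline basis, which recover the coefficients of a spline as bounded linear functionals of the spline itself, together with a standard quasi-uniform argument controlling the norm of those functionals in terms of the \emph{global} mesh ratio. First I would recall (e.g. from \citet{Schumaker:2007}, Theorem 4.41) that for each set of simple knots $\bm{k}\in\mathcal{K}_j$ there exist dual functionals $\lambda_i^{\bm{k}}$, $i=1,\dots,j$, with $\lambda_i^{\bm{k}}(B_{i'}^{\bm{k}})=\delta_{ii'}$, so that the coefficients of \emph{any} spline $g\in\mathcal{S}_q^{\bm{k}}$ are given by $a_i=\lambda_i^{\bm{k}}(g)$; moreover these functionals are local (they depend only on $g$ restricted to the support of $B_i^{\bm{k}}$) and satisfy a bound $\|\lambda_i^{\bm{k}}\|_\infty \le C_q\, (\Delta_i/\delta_i)^{q-1}$ where $\Delta_i$ and $\delta_i$ are the largest and smallest knot spacings meeting the support of $B_i^{\bm{k}}$. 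For an \emph{equally spaced} knot vector this ratio is $1$, which is exactly why \citet{Shen:2012} obtained a clean bound; for a general $\bm{k}$ this ratio can blow up, and this is the point that the generalization must handle.

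The key step is therefore to show that when $M(\bm{k})$ is small enough, the approximating spline $s_{\bm{a},\bm{k}}$ from (\ref{Ap}) has coefficients near the values of $f$ itself, \emph{regardless} of the local mesh ratio, by writing $a_i-f(\xi_i)=\lambda_i^{\bm{k}}(s_{\bm{a},\bm{k}}-f)+\lambda_i^{\bm{k}}(f)-f(\xi_i)$ for a suitable point $\xi_i$ in the support of $B_i^{\bm{k}}$, and controlling each piece. For the first piece I would \emph{not} use the arbitrary near-best spline of (\ref{Ap}) but rather a concrete quasi-interpolant $Q_{\bm{k}}f=\sum_i (\lambda_i^{\bm{k}}f) B_i^{\bm{k}}$ that reproduces polynomials of degree $q-1$; by the partition-of-unity property and a Taylor/averaged-Taylor argument localized to $\mathrm{supp}\,B_i^{\bm{k}}$, one gets $|\lambda_i^{\bm{k}}(f)-f(\xi_i)|\le C_q\,\omega(f; \Delta_i)$ (a modulus-of-continuity bound), where the dangerous ratio $(\Delta_i/\delta_i)^{q-1}$ cancels because $\lambda_i^{\bm{k}}$ is applied to the \emph{local error} $f-p_i$ of a local polynomial approximant, which is itself $O(\Delta_i^{q})$ in $L_\infty$ on the support — so the product $\|\lambda_i^{\bm{k}}\|_\infty \cdot \|f-p_i\|_{\infty,\mathrm{supp}}$ stays bounded by $C_q \Delta_i^{\kappa_\alpha}$ using the Lipschitz membership $\mathcal{F}_\alpha\subseteq\mathcal{L}(\kappa_\alpha,L_\alpha)$ assumed in the paper. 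Since $\mathcal{F}_\alpha$ is a fixed class, $C_f$ and $L_\alpha$ are uniformly bounded over it, giving $\|Q_{\bm{k}}f-f\|_\infty \le C(\mathcal{F}_\alpha) M(\bm{k})^{\kappa_\alpha\wedge\alpha}$ and $\max_i |a_i - f(\xi_i)|\le C(\mathcal{F}_\alpha) M(\bm{k})^{\kappa_\alpha\wedge\alpha}$, with constants not depending on $j$ or on the particular $\bm{k}$.

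Then I would finish as follows: choose $\delta=\delta(\mathcal{F}_\alpha,\varepsilon)>0$ so small that $C(\mathcal{F}_\alpha)\,\delta^{\kappa_\alpha\wedge\alpha}<\varepsilon$; for any $\bm{k}\in\mathcal{K}_j$ with $M(\bm{k})\le\delta$, the quasi-interpolant coefficients $a_i=\lambda_i^{\bm{k}}(f)$ satisfy $a_i\in(f(\xi_i)-\varepsilon, f(\xi_i)+\varepsilon)\subseteq(a,b)$ because $f(\xi_i)\in[a+\varepsilon,b-\varepsilon]$ by hypothesis, and $Q_{\bm{k}}f=s_{\bm{a},\bm{k}}$ satisfies $\|f-s_{\bm{a},\bm{k}}\|_\infty\le C(\mathcal{F}_\alpha)M(\bm{k})^{\kappa_\alpha\wedge\alpha}$, which has the form $C_f M(\bm{k})^{\alpha'}$ required by (\ref{Ap}) (with possibly a modified exponent $\alpha'=\kappa_\alpha\wedge\alpha$; since the lemma only asserts that \emph{some} approximating spline as in (\ref{Ap}) has coefficients in $(a,b)$, and $Q_{\bm{k}}f$ is a legitimate near-best approximant up to a constant, this is enough — alternatively one invokes the equivalence of $Q_{\bm{k}}f$ with the near-best spline of (\ref{Ap}) via the triangle inequality and the dual-functional bound applied to their difference, which lies in $\mathcal{S}_q^{\bm{k}}$). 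The main obstacle, and the place where real care is needed, is precisely the potential blow-up of the dual-functional norms $\|\lambda_i^{\bm{k}}\|_\infty$ for highly non-uniform meshes: the whole argument hinges on never applying $\lambda_i^{\bm{k}}$ to $f$ directly but only to a local error term that decays fast enough to absorb the bad constant, so that the final bound is controlled by the \emph{global} mesh size $M(\bm{k})$ alone and not by any local mesh ratio.
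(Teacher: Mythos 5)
Your instinct to use the dual B-spline functionals is the same as the paper's, and the decomposition $a_i-\text{(ref.\ value)}=\lambda_i(s_{\bm{a},\bm{k}}-f)+[\lambda_i f-\text{(ref.\ value)}]$ matches in spirit (the paper centers at $c=\inf_{I_i}f$ or $\sup_{I_i}f$ rather than a point value $f(\xi_i)$). However, the entire elaborate part of your plan is built to neutralize an obstacle that is not actually present. By de Boor's $L_\infty$-stability of B-splines — this is precisely what the paper invokes via Theorem 4.41 of \citet{Schumaker:2007} — the dual basis can be chosen so that $|\lambda_i g|\le C_1\sup_{x\in I_i}|g(x)|$ with $C_1$ depending \emph{only} on the order $q$, uniformly over all knot vectors. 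There is no mesh-ratio factor $(\Delta_i/\delta_i)^{q-1}$ to fight. With that bound in hand, the proof is immediate: $|a_i-c|\le|\lambda_i(s_{\bm{a},\bm{k}}-f)|+|\lambda_i(f-c)|\le C_1C_fM^\alpha(\bm{k})+C_1\sup_{I_i}|f-c|$, and the Lipschitz containment $\mathcal{F}_\alpha\subseteq\mathcal{L}(\kappa_\alpha,L_\alpha)$ plus $|I_i|\le qM(\bm{k})$ give $\sup_{I_i}|f-c|\le L_\alpha(qM(\bm{k}))^{\kappa_\alpha}$; this is applied directly to the \emph{same} near-best spline that satisfies (\ref{Ap}).

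Beyond being unnecessary, your workaround as stated does not actually close the gap. The claimed cancellation does not hold arithmetically: if $\|\lambda_i\|_\infty\lesssim(\Delta_i/\delta_i)^{q-1}$ and the local polynomial error on $I_i$ is only $O(\Delta_i^{\kappa_\alpha})$ (which is all the Lipschitz hypothesis gives you — $O(\Delta_i^q)$ would require $q$ derivatives, not assumed here, e.g.\ $\alpha<1$ is allowed), then the product is $O\bigl(\Delta_i^{\kappa_\alpha}(\Delta_i/\delta_i)^{q-1}\bigr)$, which is unbounded as $\delta_i\to 0$. The lemma places no lower bound on the sparseness $m(\bm{k})$, so $\delta_i$ really can be arbitrarily small relative to $\Delta_i$; a mesh-ratio-dependent constant is unusable. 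Finally, replacing the near-best spline of (\ref{Ap}) by a quasi-interpolant changes the attainable exponent to $\kappa_\alpha\wedge\alpha$, which is weaker than what (\ref{Ap}) requires and would complicate how the lemma is invoked in the proof of the main theorem (where the $M^\alpha$ decay is used). The mesh-independent stability constant is exactly the ingredient that lets the paper bound the coefficients of the spline already achieving (\ref{Ap}), without any detour.
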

\begin{proof}
Fix $q$, $j$  and inner knots $\bm{k}$, assume $I = [a, b]$, $a<b$ and $a + \varepsilon < f < b - \varepsilon$, for some $\varepsilon>0$.

We use results from section 4.6 of \citet{Schumaker:2007} on dual basis of B-splines.
If $B_1^{\bm{k}}$, \dots, $B_j^{\bm{k}}$ is the B-spline basis associated with the inner knots $\bm{k}$, then there exists a dual basis $\lambda_1$, \dots, $\lambda_j$ of linear functionals such that, for each $i,r=1,\dots,j$, $\lambda_r B_i^{\bm{k}} = 1$ if $i=r$ and is $0$ otherwise.
As a consequence,  we obtain that $\lambda_i s_{\bm{a}, \bm{k}} = a_i$, and since  $\sum_{i=1}^j B_i^{\bm{k}}(x) =1$, it follows that  $\lambda_i c = c$ for any constant $c$ and all $i=1,\ldots,j$.
This dual basis is not necessarily unique and, according to Theorem 4.41 from \citet{Schumaker:2007}, can be taken such that $|\lambda_i f| \le C_1 \sup_{x\in I_i} | f(x)|$ where $I_i$ represents the support of $B_i^{\bm{k}}$ and constant $C_1$  depends only on $q$.
Each $I_i$ consists of at most $q$ adjacent intervals in the partition induced by $\bm{k}$ and thus the length of $I_i$ is bounded by $q M (\bm{k})$.

Let $s_{\bm{a}, \bm{k}}$ be such that (\ref{Ap}) is fulfilled for $f$.
Then for any constant $c$ 
\begin{eqnarray*}
| a_i - c | &=& \big|\lambda_i s_{\bm{a}, \bm{k}} - \lambda_i f + \lambda_i f - c \big| \le \big|\lambda_i (s_{\bm{a}, \bm{k}} -  f) | + |\lambda_i (f - c)\big|\\
&\le& C_1 C_f M^\alpha(\bm{k})+ C_1 \sup_{x\in I_i}| f(x) - c |.
\end{eqnarray*}
Take $c = \inf_{x\in I_i}f(x)$ and recall that $f\in \mathcal{F}_\alpha \subseteq
\mathcal{L}(\kappa_\alpha, L_\alpha)$.
Using the Lipschitz property, we derive that $\sup_{x\in I_i}| f(x) - c | = \sup_{x\in I_i} f(x) - \inf_{x\in I_i} f(x) \le L_\alpha (q\, M(\bm{k}))^{\kappa_\alpha}$ and therefore
\begin{eqnarray*}
| a_i - \inf_{x\in I_i}f(x)|	&\le& C_1 C_f M^\alpha(\bm{k}) + C_1 L_\alpha (q\, M(\bm{k}))^{\kappa_\alpha} \le C_2 M^{\alpha\wedge\kappa_\alpha}(\bm{k}).
\end{eqnarray*}
In the same way, if we take $c = \sup_{x\in I_i}f(x)$, we derive that $\sup_{x\in I_i}| f(x) - c|
\le  L_\alpha (q\, M(\bm{k}))^{\kappa_\alpha}$ and thus $\big|a_i - \sup_{x\in I_i}f(x)\big|\le C_2 M^{\alpha \wedge \kappa_\alpha}(\bm{k})$.

Now for $\delta = (\varepsilon/(2C_2))^{1/(\alpha \wedge \kappa_\alpha)}$ conclude that if $M(\bm{k})\le \delta$, then $a_i \ge \inf_{x\in I_i}f(x)  - C_2 M^{\alpha \wedge \kappa_\alpha}(\bm{k}) \ge\inf_{x\in I_i}f(x) - \varepsilon/2> a$.
For the same choice of $\delta$ we have $a_i\le\sup_{x\in I_i} f(x) + C_2 M^{\alpha \wedge \kappa_\alpha}(\bm{k})\le\sup_{x\in I_i}f(x) + \varepsilon/2< b$.

\end{proof}

\bibliographystyle{ba}
\bibliography{main}

\end{document}